\documentclass[12pt]{amsart}
\usepackage{a4wide}
\usepackage{url}
\usepackage{hyperref}
\usepackage{amsrefs}
\usepackage{amsthm}
\usepackage{float}
\usepackage{amsmath, amssymb, mathtools}
\usepackage{graphicx}
\usepackage{comment}
\usepackage{xcolor}

\newtheorem{theorem}{Theorem}[section]
\newtheorem{lemma}[theorem]{Lemma}

\newtheorem*{theorem*}{Theorem}

\theoremstyle{definition}

\theoremstyle{remark}
\newtheorem{remark}[theorem]{Remark}

\numberwithin{equation}{section}

\begin{document}

\title[Duality of ZMC surfaces in the Lorentzian Heisenberg group]
 {Duality of zero mean curvature surfaces in the Lorentzian Heisenberg group}

\author[S.R.R. Mohanty]{Sai Rasmi Ranjan Mohanty}
\address{Department of Mathematics, Shiv Nadar University, Dadri 201314, Uttar Pradesh, India}
\email{sairasmiranjan1996@gmail.com, sm743@snu.edu.in}

\author[P. Vasu]{Priyank Vasu}
\address{Department of Mathematics, Indian Institute of Technology Patna,
Patna-801106, Bihar, India}
\email{priyank\_2121ma16@iitp.ac.in}

\subjclass{Primary 53C42, 53A10; Secondary 53C50.}

\keywords{Heisenberg groups, maximal surfaces, timelike minimal surfaces, zero mean curvature surfaces, surface duality}



\begin{abstract}

We study a transformation surface associated with a zero mean curvature surface in the three-dimensional Heisenberg group with respect to two left-invariant semi-Riemannian metrics. We investigate the duality and prove that the transformation surface also has zero mean curvature. Furthermore, we derive the Sym formula for the dual surface in both metric cases.

\end{abstract}

\maketitle
\section{Introduction}

In classical surface theory in Euclidean space, certain minimal surfaces admit a
nontrivial partner surface, called a \emph{dual surface}, which is conformally related to the original surface and has parallel tangent planes.
A fundamental example is provided by the catenoid and the helicoid, which arise as a dual pair within a one-parameter family of associated surfaces.
Motivated by this, Christoffel in \cite{Christoffel1867} classified surfaces admitting a dual surface; these are precisely the \emph{isothermic surfaces}, and the corresponding dual is known as the Christoffel dual. The dual surface is unique up to homothety; furthermore, applying the duality transformation twice recovers the original surface. Dual surfaces in Euclidean space were studied by Kamberov et al.~\cite{KamberovPeditPinkall1998}; explicit representations of Christoffel duals in Euclidean and Minkowski spaces were obtained by Fujimori et al.~\cite{FujimoriHertrichJerominKokubuUmeharaYamada2018}, and a dual surface theory for minimal surfaces in the 3-dimensional Heisenberg group with left-invariant Riemannian metric was given by Kobayashi in \cite{Kobayashi2025Duality}.

In this paper, we investigate an analogue of dual surfaces in the three-dimensional
Heisenberg group $\mathrm{Nil}^3$ which is identified with $\mathbb{R}^3(x_1,x_2,x_3)$ equipped
with the group multiplication
\[
(x_1,x_2,x_3)\cdot(\tilde x_1,\tilde x_2,\tilde x_3)
= \bigl(x_1+\tilde x_1,\; x_2+\tilde x_2,\;
x_3+\tilde x_3+\tfrac{1}{2}(x_1\tilde x_2-\tilde x_1 x_2)\bigr),
\]
and endowed with either of the two left-invariant semi-Riemannian metrics
\[
ds_{\pm}^2 = \pm dx_1^2 + dx_2^2 \mp
\bigl(dx_3 + \tfrac12(x_2\,dx_1 - x_1\,dx_2)\bigr)^2.
\]
We denote $\operatorname{Nil}_1^3 := \bigl(\operatorname{Nil}^3, ds_+^2\bigr),
\ \operatorname{Nil}_t^3 := \bigl(\operatorname{Nil}^3, ds_-^2\bigr),$
and refer to them as the \emph{Lorentzian Heisenberg groups}. We introduce the notion of a dual surface associated with a zero mean
curvature surface in both $\operatorname{Nil}_1^3$ and $\operatorname{Nil}_t^3$, and show that the dual surface also has zero mean curvature and applying the dual transformation twice gives the original surface.
The main idea of this article is motivated by the work of Kobayashi~\cite{Kobayashi2025Duality}.

A surface immersed in a Lorentzian manifold is called \emph{spacelike} if the induced metric is Riemannian.
Spacelike zero mean curvature surfaces, also known as maximal surfaces, in
$\operatorname{Nil}_1^3$ have been studied extensively; see
\cites{Brander&Kobayashi2025,CintraMercuriOnnis2017,Lee2011}.
Analogously, a surface immersed in a Lorentzian manifold is called \emph{timelike} if the induced metric is Lorentzian.
Timelike zero mean curvature surfaces, or timelike minimal surfaces, in
$\operatorname{Nil}_t^3$ have been investigated in
\cites{Kiyohara&Kobayashi_timelike2022,CintraMercuriOnnis2017,Kiyohara2025}.


This paper is divided into two parts. First,
in Section~\ref{sec:Dual_maximal_surfaces}, we study maximal surfaces in
$\operatorname{Nil}_1^3$.
Subsection~\ref{subsec:Preliminaries_maximal_surfaces} recalls the spinor representation and the normal Gauss map of a maximal surface, and in Subsection~\ref{subsec: Dual maximal surface}, we define the dual generating spinors for a given maximal surface and
establish the maximality and duality of the associated surface
(Theorem~\ref{maximal main theorem}).
In Subsection~\ref{subsec: Sym formula for dual maximal surface}, we derive the
Sym formula for the dual maximal surface via the logarithmic derivative of the extended frame of the given maximal surface with respect to the spectral parameter. In addition, we show that the generalized maximal surface and its dual preserve both the singularity set and the nature of the singularities.
Analogously, in the second part, in Section~\ref{sec:Dual_timelike_minimal_surfaces},
we study timelike minimal surfaces in $\operatorname{Nil}_t^3$. In Subsection~\ref{subsec:Preliminaries_timelike_minimal_surfaces}, we begin by reviewing the
para-complex structure, the Gauss map, and the spinor representation of a timelike
minimal surface in $\operatorname{Nil}_t^3$.
In Subsection~\ref{subsec: Dual timelike minimal surface}, we define the dual generating
spinors and establish the main result of this section
(Theorem~\ref{thm 1 for timelike}).
Finally, in Subsection~\ref{subsec: The Sym formula for dual timelike minimal surface}, we derive the Sym formula for dual timelike minimal surfaces and conclude with
some examples.

\section{Dual maximal surfaces in \texorpdfstring{$\operatorname{Nil}_1^3$}{Nil_1^3} and the Sym formula}
 \label{sec:Dual_maximal_surfaces}

\subsection{Preliminaries for maximal surfaces}\label{subsec:Preliminaries_maximal_surfaces}
In this subsection, we recall the necessary background on maximal surfaces in the Lorentzian Heisenberg group $\operatorname{Nil}_1^3$ from \cite{Brander&Kobayashi2025}. In particular, we review the generating spinors associated with maximal surfaces and describe their normal Gauss maps. 

Let $f \colon M \longrightarrow \mathrm{Nil}_1^3$ be a conformal spacelike immersion of a Riemann surface $M$, equipped with a local conformal parameter $z = x + iy$ in a simply connected domain $\mathbb{D}$ in $M$, and an orthonormal basis of Lie algebra $\mathfrak{nil}_1^3$ of $\mathrm{Nil}_1^3$ is $\{e_1,e_2,e_3\}$. The left-invariant Maurer-Cartan form of $f$ is given by
\[
f^{-1}df = (f^{-1}f_z)\,dz + (f^{-1}f_{\bar z})\,d\bar z,
\]
where
\[
f^{-1}f_z = \sum_{k=1}^3 \varphi_k e_k,
\qquad
f^{-1}f_{\bar z} = \sum_{k=1}^3 \overline{\varphi_k}\, e_k.
\]
The conformality condition on $f$ yields the relations
\begin{equation}\label{eq:conformal_rel}
\varphi_1^2 + \varphi_2^2 - \varphi_3^2 = 0,
\qquad
|\varphi_1|^2 + |\varphi_2|^2 - |\varphi_3|^2 = \tfrac12 e^u \neq 0,
\end{equation}
and hence the induced metric is given by $I = e^u |dz|^2$.

The system \eqref{eq:conformal_rel} can be parametrized using a pair of \emph{generating spinors} $\psi_1$ and $\psi_2$, through the expressions
\begin{equation}\label{eq:phi_from_spinors}
\varphi_1 = (\overline{\psi_2})^2 - \psi_1^2, \qquad
\varphi_2 = i\big( (\overline{\psi_2})^2 + \psi_1^2 \big), \qquad
\varphi_3 = 2i \psi_1 \overline{\psi_2}.
\end{equation}

Let $N$ denote the positively oriented unit normal vector field along the immersion $f$ and define the rescaled normal vector $L:= e^{u/2} N$. The \emph{support function} $h$ is then defined by $
h = \langle f^{-1} L, e_3 \rangle.$
In terms of the generating spinors $\psi_1$ and $\psi_2$, we obtain the identities
\begin{equation}\label{eq:u_h_spinors}
e^u = 4\big(|\psi_1|^2 - |\psi_2|^2\big)^2,
\qquad
h = 2\big(|\psi_1|^2 + |\psi_2|^2\big).
\end{equation}
The immersion $f$ is regular if and only if $|\psi_1| \neq |\psi_2|$. Furthermore, we set the Dirac potential by setting $e^{w/2} = U = V$, and define the Abresch-Rosenberg quadratic differential by $Q\,dz^2 = 4B\,dz^2$. Following \cite{Kiyohara2025}, these can be expressed as
\begin{equation}\label{eq:UV_B}
e^{w/2} = U = V = \frac{H}{2} e^{u/2} i - \frac{1}{4} h,
\qquad
B = \frac{2iH + 1}{2}
\bigl( \psi_1 \overline{\psi_{2}}_z - \overline{\psi_2} \, {\psi_1}_z \bigr)
+ 2iH (\psi_1 \overline{\psi_2})^2,
\end{equation}
where $H$ is the mean curvature of the spacelike surface $f$.

The generating spinors $\tilde{\psi} = (\psi_1, \psi_2)$ satisfy a \emph{linear spinor system} of the form
\begin{equation}\label{eq:linear_spinor_system}
\tilde{\psi}_z = \tilde{\psi} \, \widetilde{U},
\qquad
\tilde{\psi}_{\bar z} = \tilde{\psi} \, \widetilde{V},
\end{equation}
where the coefficient matrices are given by
\[
\widetilde{U} =
\begin{pmatrix}
\frac12 w_z - \frac{i}{2} H_z e^{-w/2 + u/2} & - e^{w/2} \\[4pt]
B e^{-w/2} & 0
\end{pmatrix},
\qquad
\widetilde{V} =
\begin{pmatrix}
0 & - \overline{B} e^{-w/2} \\[4pt]
e^{w/2} & \frac12 w_{\bar z} - \frac{i}{2} H_{\bar z} e^{-w/2 + u/2}
\end{pmatrix}.
\]

The second column of the first equation, together with the first column of the second equation, yields the nonlinear Dirac equations. In particular, letting $\mathfrak{D}$ denote the Dirac operator for the conformal immersion $f$ in $\operatorname{Nil_1^3}$,
\begin{equation}\label{eq:dirac}
\mathfrak{D}\begin{pmatrix}\psi_1\\\psi_2\end{pmatrix}
:=\begin{pmatrix}
\partial_z\psi_2 + U\psi_1\\[4pt]
-\partial_{\bar z}\psi_1 + V\psi_2
\end{pmatrix}
=
\begin{pmatrix}0\\0\end{pmatrix},
\qquad U=V=e^{w/2}.
\end{equation}

The left-translated unit normal vector $f^{-1}N$ then takes values in the union of the two hyperbolic planes
$\mathbb{H}_+^2 \cup \mathbb{H}_-^2 \subset \mathfrak{nil}_1^3$.
More precisely, $f^{-1}N$ lies in $\mathbb{H}_+^2$ when $|\psi_1| < |\psi_2|$, and in $\mathbb{H}_-^2$ when $|\psi_1| > |\psi_2|$.
By composing $f^{-1}N$ with the stereographic projection $\pi$ from the south pole, we obtain the \emph{normal Gauss map} $
g = \pi \circ (f^{-1}N) \colon M \longrightarrow \mathbb{C} \cup \{\infty\} \setminus \mathbb{S}^1.$ In terms of the generating spinors, the Gauss map $g$ and the normal vector $f^{-1}N$ can be written as
\begin{equation}\label{eq:gauss_spinor}
g = \frac{\psi_1}{\overline{\psi_2}},
\qquad
f^{-1}N =
\frac{1}{|g|^2 - 1}
\bigl(
2\,\Im(g)\,e_1 - 2\,\Re(g)\,e_2 - (|g|^2 + 1)\,e_3
\bigr).
\end{equation}

We next recall the associated family of Maurer-Cartan forms $\alpha^\lambda$, defined for $\lambda \in \mathbb{S}^1$ by
\begin{equation}\label{eq:maure_Cartan_H_nonzero}
\alpha^\lambda = \widetilde{U}^\lambda\,dz + \widetilde{V}^\lambda\,d\bar z.
\end{equation}
Here the matrices $\widetilde{U}_\lambda$ and $\widetilde{V}_\lambda$ are given by
\begin{equation}\label{eq:UV_tilde_H_nonzero}
\widetilde{U}^\lambda =
\begin{pmatrix}
\frac14 w_z - \frac{i}{2} H_z e^{-w/2+u/2} & -\lambda^{-1} e^{w/2} \\[6pt]
\lambda^{-1} B e^{-w/2} & -\frac14 w_z
\end{pmatrix},
\quad
\widetilde{V}^\lambda =
\begin{pmatrix}
-\frac14 w_{\bar z} & -\lambda \overline{B} e^{-w/2} \\[6pt]
\lambda e^{w/2} & \frac14 w_{\bar z} - \frac{i}{2} H_{\bar z} e^{-w/2+u/2}
\end{pmatrix}.
\end{equation}
Note that $\widetilde U^\lambda|_{\lambda=1}=\widetilde U$ and
$\widetilde V^\lambda|_{\lambda=1}=\widetilde V$ in \eqref{eq:linear_spinor_system} after suitable gauge transformation applied to $\widetilde{U}$ and $\widetilde{V}$.

\begin{theorem} \cite[Theorem 3.2]{Brander&Kobayashi2025}\label{thm:3.2_brander_maximal}
Let $f:\mathbb{D}\longrightarrow\mathrm{Nil_1^3}$ be a conformal spacelike immersion and $\alpha_\lambda$
the $1$-form defined in \eqref{eq:maure_Cartan_H_nonzero} and $g$ is the normal Gauss map. Then the following statements are equivalent:
\begin{enumerate}
\item $f$ is a maximal surface.
\item $d + \alpha^\lambda$ is a family of flat connections on $\mathbb D\times\mathrm{SU}_{2}$.
\item The normal Gauss map $g$ for $f$ is a nowhere holomorphic harmonic map into the
2-sphere.
\end{enumerate}
\end{theorem}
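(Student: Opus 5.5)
The plan is to prove the cycle (1)$\Leftrightarrow$(2) and (1)$\Leftrightarrow$(3), using the compatibility conditions of the linear spinor system \eqref{eq:linear_spinor_system} as the link between them.

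\textbf{Step 1: the structure equations at $\lambda=1$.} Since $\tilde\psi$ solves $\tilde\psi_z=\tilde\psi\widetilde U$ and $\tilde\psi_{\bar z}=\tilde\psi\widetilde V$, the system \eqref{eq:linear_spinor_system} is integrable. Hence $\widetilde U_{\bar z}-\widetilde V_z+[\widetilde U,\widetilde V]=0$ holds at $\lambda=1$, up to the gauge mentioned after \eqref{eq:UV_tilde_H_nonzero}. Written out entrywise, these are the Gauss--Codazzi equations for $f$. I expect them to take the following form:
\begin{itemize}
\item a Gauss equation relating $w_{z\bar z}$, $e^{w}$, $|B|^2e^{-w}$ and the $H$-derivative terms;
\item a Codazzi equation expressing $B_{\bar z}$ through $H_z$ and $e^{w/2}$-type factors.
\end{itemize}

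\textbf{Step 2: (1)$\Leftrightarrow$(2).} I will compute the curvature
\[
\partial_{\bar z}\widetilde U^\lambda-\partial_z\widetilde V^\lambda+[\widetilde U^\lambda,\widetilde V^\lambda]
\]
and sort it by powers of $\lambda$. The diagonal entries are $\lambda$-independent, because the off-diagonal products pair $\lambda^{-1}$ with $\lambda$. The off-diagonal entries carry $\lambda^{\pm1}$.

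\emph{Direction (1)$\Rightarrow$(2).} If $H\equiv0$, the $H_z$ and $H_{\bar z}$ terms vanish. The $\lambda^{\pm1}$ coefficients then reduce to $e^{w/2}_{\bar z}-\tfrac12 w_{\bar z}e^{w/2}=0$, which is trivially true, and to $B_{\bar z}=0$ (resp.\ its conjugate), which is the Codazzi equation with $H=0$. The diagonal is the Gauss equation. So Step 1 gives flatness for every $\lambda$.

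\emph{Direction (2)$\Rightarrow$(1).} Suppose the connections are flat for all $\lambda\in\mathbb S^1$. Compare the $\lambda^{-1}$ off-diagonal coefficient with the $\lambda=1$ Codazzi equation, i.e.\ subtract the $\lambda=1$ identity. The residual terms involve $H_{\bar z}e^{u/2}$ multiplied by $e^{w/2}$ and by $B$. They force $H_z=0$, so $H$ is constant. Then I would use \eqref{eq:UV_B}, where $e^{w/2}=\frac{H}{2}e^{u/2}i-\frac14h$ must be consistent with the $\lambda$-family. A constant nonzero $H$ makes the $2iH$ terms in $B$ incompatible with $B$ being holomorphic; equivalently the $\lambda$-deformation is not an $\mathrm{SU}_2$-symmetry. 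This yields $H=0$.

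\textbf{Step 3: (1)$\Leftrightarrow$(3).} Use $g=\psi_1/\overline{\psi_2}$ from \eqref{eq:gauss_spinor} and differentiate with the Dirac equation \eqref{eq:dirac}. This gives $g_z$ and $g_{\bar z}$ in terms of $e^{w/2}$, $B$ and the spinors. In particular $g_{\bar z}\propto e^{w/2}(|\psi_1|^2-|\psi_2|^2)/\overline{\psi_2}^2$, which is nonzero when $h\neq 0$ and $f$ is regular, so $g$ is nowhere holomorphic. Computing $g_{z\bar z}$ and forming the harmonic map tension for the target metric $4|dg|^2/(1-|g|^2)^2$ (the hyperbolic model; this is presumably what ``2-sphere'' means after the appropriate identification), I expect
\[
g_{z\bar z}+\frac{2\bar g\,g_zg_{\bar z}}{1-|g|^2}=(\text{factor})\cdot H_z \ \text{or}\ H\cdot(\ldots).
\]
Hence $g$ is harmonic iff $H=0$. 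Alternatively, I can argue via the standard correspondence: $d+\alpha^\lambda$ is flat for all $\lambda$ iff the frame $F$, with $g$ read off from $F e_3 F^{-1}$, is a harmonic-map extended frame. That proves (2)$\Leftrightarrow$(3) directly.

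\textbf{Main obstacle.} The hard step is (2)$\Rightarrow$(1): deducing $H=0$ rather than merely $H$ constant. It requires careful bookkeeping of the $\lambda^{\pm1}$ coefficients together with the specific dependence of $B$ and $e^{w/2}$ on $H$ in \eqref{eq:UV_B}. This is where I would spend the most care, possibly following the computation in \cite{Brander&Kobayashi2025} directly.
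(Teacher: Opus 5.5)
The paper gives no proof of this statement; it quotes it from Brander--Kobayashi. So the comparison here is with what a complete argument needs, and two steps of your plan fail as written.

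\textbf{(2)$\Rightarrow$(1).} Your $\lambda$-bookkeeping does give constant $H$. Indeed, the $\lambda^{-1}$-coefficient of the $(1,2)$ entry of the curvature of $d+\alpha^\lambda$ is, up to sign, exactly $\tfrac{i}{2}H_{\bar z}e^{u/2}$.

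The step from constant $H$ to $H=0$ is wrong, however. The Abresch--Rosenberg differential is holomorphic for every constant mean curvature surface; that is why it is introduced. So the $2iH$-terms in \eqref{eq:UV_B} do not conflict with $B_{\bar z}=0$. Structurally, once $H$ is constant, the only $\lambda$-dependent parts of the curvature are $\lambda^{-1}$ times a multiple of $B_{\bar z}$ and $\lambda$ times a multiple of $(\overline{B})_z$. The diagonal part is $\lambda$-independent. Hence no $\lambda$-bookkeeping can distinguish $H=0$ from $H=c\neq0$.

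The missing idea is the reality condition contained in (2). A connection on $\mathbb D\times\mathrm{SU}_2$ is $\mathfrak{su}_2$-valued, and for $|\lambda|=1$ this forces $(\widetilde V^\lambda)_{21}=-\overline{(\widetilde U^\lambda)_{12}}$, i.e.\ $\lambda e^{w/2}=\lambda\overline{e^{w/2}}$. So $e^{w/2}$ is real. Since $\operatorname{Im}e^{w/2}=\tfrac H2 e^{u/2}$ by \eqref{eq:UV_B}, you get $H\equiv0$ at once.

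The same issue affects your Step 1. A single row solution $\tilde\psi$ only yields $\tilde\psi\cdot(\text{curvature})=0$, not vanishing of the full matrix. You need a second independent solution, which comes from $(-\overline{\psi_2},\overline{\psi_1})$ (after the scalar gauge) precisely because $\alpha^1$ is $\mathfrak{su}_2$-valued when $H=0$. That rescues (1)$\Rightarrow$(2), but Step 1 is not available for a general $f$.

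\textbf{(1)$\Leftrightarrow$(3).} Here you picked the wrong target: ``2-sphere'' is meant literally. The frame is $\mathrm{SU}_2$-valued, so $g$ is harmonic into $\mathrm{SU}_2/\mathrm{U}_1=S^2$ with metric $4|dg|^2/(1+|g|^2)^2$. The equation is therefore $(1+|g|^2)g_{z\bar z}-2\bar g\,g_zg_{\bar z}=0$. The target is swapped relative to Riemannian $\mathrm{Nil}^3$, where the frame is $\mathrm{SU}_{1,1}$ and the target is $\mathbb H^2$.

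For $H=0$ (so $e^{w/2}=-h/4$), the spinor system gives
\[
g_z=-\frac{2B}{\overline{\psi_2}^{\,2}},\qquad g_{\bar z}=-\frac{h^2}{8\,\overline{\psi_2}^{\,2}},\qquad g_{z\bar z}=\frac{hB\,\overline{\psi_1}}{\overline{\psi_2}^{\,3}}.
\]
These satisfy the spherical equation identically. The hyperbolic tension you planned to compute instead equals $2hB\,\overline{\psi_1}\,|\psi_2|^2/\bigl(\overline{\psi_2}^{\,3}(|\psi_2|^2-|\psi_1|^2)\bigr)$, which is nonzero in general.

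Note also that $g_{\bar z}$ involves $|\psi_1|^2+|\psi_2|^2=h/2$, not the difference. Nowhere-holomorphicity therefore comes from $h>0$, not from regularity.

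For (3)$\Rightarrow$(1), your sketch is not yet an argument. A tension proportional to $H_z$ would again give only constant $H$. The harmonic-map loop-group route produces some flat $\mathfrak{su}_2$-family for $g$, but you would still have to show it is the $\alpha^\lambda$ of $f$, in particular that $e^{w/2}$ is real. That is where $H=0$ has to come from.
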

For more details on the maximal surface in $\mathrm{Nil_1^3}$, see \cite{Brander&Kobayashi2025,Lee2011}.

\medskip
Next, starting from a given pair of generating spinors associated with a maximal surface in $\mathrm{Nil}_1^3$, we introduce a new pair of \emph{dual generating spinors}.
We show that this dual pair satisfies the corresponding spinor system and therefore
determines a new maximal surface in $\mathrm{Nil}_1^3$.
Moreover, applying the same dual construction to the new surface recovers the original
surface, up to rigid motions.

\subsection{Dual maximal surface}\label{subsec: Dual maximal surface}
Let $\psi_1$ and $\psi_2$ denote the generating spinors of a conformal maximal surface
$f$ in $\mathrm{Nil}_1^3$. We define the corresponding \emph{dual generating spinors}
$\psi_1^{\ast}$ and $\psi_2^{\ast}$ by
\begin{equation}\label{dual_spinors}
\psi_1^{\ast}
= \frac{4\sqrt{-B}}{h}\,\psi_2,
\qquad
\psi_2^{\ast}
= \frac{4\sqrt{-\overline{B}}}{h}\,\psi_1,
\end{equation}
where $B$ is the Abresch-Rosenberg differential and $h$ is the support function of $f$ as
introduced in Subsection~\ref{subsec:Preliminaries_maximal_surfaces}.
In the case $B \equiv 0$, we have $\psi_1^{\ast} \equiv \psi_2^{\ast} \equiv 0$; therefore, we exclude such maximal surfaces. If $B \not\equiv 0$, then since $B$ is holomorphic its zeros are isolated and correspond to
branch points of the dual surface, which will be discussed in
Theorem~\ref{maximal main theorem}.

The following lemma shows that the dual generating spinors satisfy a linear spinor system.

\begin{lemma}\label{lemma:maximal}
 Let $f \colon \mathbb{D} \longrightarrow \mathrm{Nil}_1^3$ be a conformal maximal surface with generating
spinors $(\psi_1, \psi_2)$, and suppose that $B\not\equiv0$.
Let $(\psi_1^{\ast}, \psi_2^{\ast})$ denote the associated dual generating spinors
defined in \eqref{dual_spinors}.
Then the vector of generating spinors $\tilde{\psi}^{\ast} = (\psi_1^{\ast}, \psi_2^{\ast})$ satisfies the following linear spinor system:
\begin{equation}\label{eq:spinor_system_maximal}
  \tilde\psi^{\ast}_z = \tilde\psi^{\ast}\,\widetilde U^{\ast},
  \qquad
  \tilde\psi^{\ast}_{\bar z} = \tilde\psi^{\ast}\,\widetilde V^{\ast},
\end{equation}
where
\[
\widetilde U^{\ast} =
\begin{pmatrix}
\dfrac12 w^{\ast}_z & -e^{w^{\ast}/2}\\[3pt]
B^{\ast}e^{-w^{\ast}/2} & 0
\end{pmatrix},
\qquad
\widetilde V^{\ast} =
\begin{pmatrix}
0 & -\overline{B^{\ast}}e^{-w^{\ast}/2}\\[3pt]
e^{w^{\ast}/2} & \dfrac12 w^{\ast}_{\bar z}
\end{pmatrix},
\]
with
\begin{equation}\label{eq:B&Dicar_potential_maximal}
e^{w^{\ast}/2} = \frac{4|B|}{h},\qquad B^{\ast}=B.
\end{equation}
\end{lemma}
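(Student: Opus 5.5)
The plan is a direct verification. We rewrite the dual spinors in terms of the Dirac potential of $f$, then differentiate them using the linear spinor system \eqref{eq:linear_spinor_system} of the original surface. Since $f$ is maximal, $H\equiv 0$, so \eqref{eq:UV_B} gives $e^{w/2}=-h/4$. The dual spinors \eqref{dual_spinors} then read
\[
\psi_1^{\ast}=-\sqrt{-B}\,e^{-w/2}\psi_2,\qquad \psi_2^{\ast}=-\sqrt{-\overline B}\,e^{-w/2}\psi_1 .
\]
The $H_z,H_{\bar z}$ terms drop out of $\widetilde U,\widetilde V$, and the system for $(\psi_1,\psi_2)$ becomes, componentwise,
\[
\psi_{1z}=\tfrac12 w_z\psi_1+Be^{-w/2}\psi_2,\quad \psi_{2z}=-e^{w/2}\psi_1,\quad \psi_{1\bar z}=e^{w/2}\psi_2,\quad \psi_{2\bar z}=-\overline B e^{-w/2}\psi_1+\tfrac12 w_{\bar z}\psi_2 .
\]
We use two further facts. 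First, $B$ is holomorphic for maximal surfaces: the Abresch--Rosenberg differential is holomorphic when $H=0$, which is also implicit in the flatness statement of Theorem~\ref{thm:3.2_brander_maximal}. Second, we fix the branch $\sqrt{-\overline B}=\overline{\sqrt{-B}}$ on a simply connected set avoiding the isolated zeros of $B$.

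We take $e^{w^{\ast}/2}=4|B|/h$ and $B^{\ast}=B$ as in \eqref{eq:B&Dicar_potential_maximal}. Then
\[
\tfrac12 w^{\ast}_z=\tfrac14\,\frac{B_z}{B}-\frac{h_z}{h}=\tfrac14\,\frac{B_z}{B}-\tfrac12 w_z,
\]
using $|B|^2=B\overline B$ and $\overline B_z=0$. We then check the four scalar equations contained in \eqref{eq:spinor_system_maximal} one at a time.

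\emph{Off-diagonal equations.} Consider $\psi^{\ast}_{1\bar z}=e^{w^{\ast}/2}\psi_2^{\ast}$ and $\psi^{\ast}_{2z}=-e^{w^{\ast}/2}\psi_1^{\ast}$. Differentiating $\psi_1^{\ast}$ in $\bar z$, the factor $\sqrt{-B}$ is constant. The $w_{\bar z}$ terms coming from $e^{-w/2}$ and from $\psi_{2\bar z}$ cancel, which leaves a multiple of $\overline B e^{-w}\psi_1$. The right-hand side is $|B|\,\overline{\sqrt{-B}}\,e^{-w}\psi_1$. Writing $|B|=\sqrt{-B}\,\overline{\sqrt{-B}}$, the two sides agree up to an overall sign. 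The equation for $\psi_2^{\ast}$ is the conjugate-symmetric computation.

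\emph{Diagonal equations.} For $\psi^{\ast}_{1z}=\tfrac12 w^{\ast}_z\psi_1^{\ast}+Be^{-w^{\ast}/2}\psi_2^{\ast}$ the derivative of $\sqrt{-B}$ contributes $\tfrac12\frac{B_z}{B}\sqrt{-B}$. The derivative of $e^{-w/2}$ contributes $-\tfrac12 w_z$. The term $\psi_{2z}=-e^{w/2}\psi_1$ contributes $\sqrt{-B}\,\psi_1$. On the right-hand side, $Be^{-w^{\ast}/2}\psi_2^{\ast}$ is a multiple of $\frac{B}{|B|}\sqrt{-\overline B}\,\psi_1$, and this simplifies to $\pm\sqrt{-B}\,\psi_1$. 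It therefore remains to match the $B_z/B$ and $w_z$ coefficients against $\tfrac12 w^{\ast}_z$. Here I expect a mismatch: $\tfrac12\frac{B_z}{B}$ on the left versus $\tfrac14\frac{B_z}{B}$ on the right. This would mean that the upper-left entry has to be read in the gauge used for $\widetilde U^\lambda$ (with $\tfrac14 w_z$ on the diagonal), or that the normalization of $w^{\ast}$ must be adjusted. The same gauge issue appears in the original system, as noted after \eqref{eq:UV_tilde_H_nonzero}, and I would resolve it by working in whichever gauge makes the original system hold exactly.

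The main obstacle is the sign and branch bookkeeping, not the differentiation. There are three sources: $e^{w/2}=-h/4$ is negative while $e^{w^{\ast}/2}=4|B|/h$ is positive, the choice of branch of $\sqrt{-B}$, and the $\tfrac12$ versus $\tfrac14$ diagonal gauge. I would first pin down conventions so that the off-diagonal equations hold exactly; if needed, I would replace $\sqrt{-\overline B}$ by $-\overline{\sqrt{-B}}$, which changes $(\psi_1^{\ast},\psi_2^{\ast})$ only by a sign. I would then verify that the diagonal equations are consistent with that choice. Finally, I would note that the system extends across the zeros of $B$ in the sense of branch points, as announced before the lemma.
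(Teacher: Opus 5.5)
Your route is the same direct verification as the paper's. Your treatment of the off-diagonal equations is correct, including the sign diagnosis. The branch has to be $\sqrt{-\overline B}=-\overline{\sqrt{-B}}$ (e.g.\ $\sqrt{-B}=i\sqrt B$, $\sqrt{-\overline B}=i\,\overline{\sqrt B}$). The paper uses this tacitly when it rewrites $\frac{4\sqrt{-B}}{h}\psi_{2z}=\sqrt{-B}\,\psi_1$ as $\frac{hB}{4|B|}\psi_2^\ast$.

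The gap is in the diagonal equations, and it comes from an arithmetic slip, not a gauge issue. From $e^{w^\ast/2}=4|B|/h$, $\overline B_z=0$ and $(\log|B|)_z=\tfrac12(\log B+\log\overline B)_z=\tfrac12 B_z/B$, one gets
\[
\tfrac12 w^\ast_z=\tfrac12\,\frac{B_z}{B}-\frac{h_z}{h}=\tfrac12\,\frac{B_z}{B}-\tfrac12 w_z=\Bigl(\log\frac{4\sqrt{-B}}{h}\Bigr)_z ,
\]
not $\tfrac14 B_z/B-h_z/h$. This is exactly the coefficient of $\psi_1^\ast$ you found on the left-hand side, so the mismatch you anticipate does not exist. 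The last identity is precisely the one the paper's proof starts from.

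As written, your proposal leaves the diagonal equations unproved and suggests changing the gauge or the normalization of $w^\ast$. That would prove a different statement, because the lemma fixes the entries $\tfrac12 w^\ast_z$ and $e^{w^\ast/2}=4|B|/h$. The remark after the definition of $\widetilde U^\lambda$ concerns the loop-group gauge, not the spinor system you are differentiating, which is already in the same gauge as the lemma.

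Once the slip is corrected, that single branch choice closes all four scalar equations at once:
\begin{itemize}
\item In the $z$-equation for $\psi_1^\ast$, $Be^{-w^\ast/2}\psi_2^\ast=\frac{B\sqrt{-\overline B}}{|B|}\psi_1=\sqrt{-B}\,\psi_1$. This matches $\frac{4\sqrt{-B}}{h}\psi_{2z}=\frac{4\sqrt{-B}}{h}\cdot\frac h4\,\psi_1$.
\item In the $\bar z$-equation for $\psi_2^\ast$, $\tfrac12 w^\ast_{\bar z}=\tfrac12\overline B_{\bar z}/\overline B-\tfrac12 w_{\bar z}$ equals the logarithmic $\bar z$-derivative of $4\sqrt{-\overline B}/h$.
\item In the same equation, $-\overline B e^{-w^\ast/2}\psi_1^\ast=-\sqrt{-\overline B}\,\psi_2=\frac{4\sqrt{-\overline B}}{h}\psi_{1\bar z}$.
\end{itemize}
All of this is a computation on the set where $B\neq0$, as in the paper. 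The coefficients $\tfrac12 w^\ast_z$ and $e^{-w^\ast/2}$ are singular at the zeros of $B$, so your closing claim that the system ``extends across'' those zeros is not right as stated.
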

\begin{proof}
   Using \eqref{eq:B&Dicar_potential_maximal},  and the relations
\[
\frac12 w^{\ast}_z
   = \Bigl(\log{\frac{4\sqrt{-B}}{h}}\Bigr)_z,
   \qquad
   \frac{ h B}{4|B|}
   = B^{\ast} e^{-w^{\ast}/2}, \qquad e^{w/2}=- h/4,
\]
where $B(z)\neq 0$, we compute
\begin{align*}
\psi^{\ast}_{1z}
 &=\Bigl({\frac{4\sqrt{-B}}{h}}\Bigr)_z\psi_2
   +{\frac{4\sqrt{-B}}{h}}\;\psi_{2z}\\
 &=\Bigl(\log{\frac{4\sqrt{-B}}{h}}\Bigr)_z\psi_1^{\ast}
  +\frac{ h B}{4|B|}\psi_2^{\ast}\\
  &=\frac 12 w_z^\ast\psi_1^\ast+B^\ast e^{-w^\ast/2}\psi_2^\ast.
\end{align*}
  Similarly, 
\begin{align*}
\psi^{\ast}_{2z}
 &=\Bigl(\frac{4\sqrt{-\overline{B}}}{h}\Bigr)_z\psi_1
   +\frac{4\sqrt{-\overline{B}}}{h}\;\psi_{1z}\\
 &=-\!(\log h)_z\frac{4\sqrt{-\overline{B}}}{h}\psi_1
   +\frac{4\sqrt{-\overline{B}}}{h}\Bigl(\tfrac12 w_z\psi_1+Be^{-w/2}\psi_2\Bigr)\\
   &=-e^{w^{\ast}/2}\psi^{\ast}_1.
\end{align*}
Furthermore, a similar computation for $\tilde\psi^{\ast}_{\bar z}$ gives the second equation
in~\eqref{eq:spinor_system_maximal}.  
\end{proof}

Using the above lemma, we now state the main theorem for this section.

\begin{theorem}\label{maximal main theorem}
Retain all the notation and the assumptions of Lemma~\ref{lemma:maximal} and the relation in \eqref{eq:B&Dicar_potential_maximal}, there exists a maximal immersion $f^{\ast}$ in $\mathrm{Nil_1^3}$ whose generating
spinors are $\psi_1^{\ast}$ and $\psi_2^{\ast}$, with Dirac potential
$U^{\ast}=V^{\ast}=e^{w^{\ast}/2}$, and whose left-translated unit normal equals
$f^{-1}N$ up to sign.  
Moreover, the following hold:
\begin{enumerate}
\item By choosing the left-translated unit normal $N^{\ast}$ so that $(f^{\ast})^{-1}N^{\ast}=f^{-1}N\subset\mathfrak{nil_1^3}$, the metric $e^{u^{\ast}}|dz|^2$, support function $h^{\ast}$, Abresch-Rosenberg differential $B^{\ast}dz^2$, and normal Gauss map $g^{\ast}$ of $f^{\ast}$ are respectively given by
      \begin{equation}\label{eq:ehBg_dual_maximal}
      e^{u^{\ast}}
        = \frac{16^2|B|^2}{h^4}\,e^u,
      \qquad
      h^{\ast} = \frac{16|B|}{h},
      \qquad
      B^{\ast}=B,
      \qquad\text{and }\quad
      g^{\ast}={g}.
      \end{equation}
\item It satisfies the duality $f^{\ast\ast}=f$ up to a rigid motion, and the zeros of $B$ are branch points of the dual $f^\ast$.

\end{enumerate}
The surface $f^{\ast}$ will be called the \emph{dual maximal surface} to $f$.
\end{theorem}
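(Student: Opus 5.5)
Since $f$ is maximal, $H\equiv 0$, and \eqref{eq:UV_B} gives $e^{w/2}=-h/4$ and $B=\tfrac12(\psi_1\overline{\psi_2}_z-\overline{\psi_2}\psi_{1z})$, with $B$ holomorphic. Lemma~\ref{lemma:maximal} shows that $\tilde\psi^\ast$ satisfies the linear spinor system \eqref{eq:spinor_system_maximal}, whose coefficients have exactly the $H=0$ form of \eqref{eq:linear_spinor_system}. In particular, the nonlinear Dirac equation \eqref{eq:dirac} holds with $U^\ast=V^\ast=e^{w^\ast/2}$.

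The plan is to define $\varphi_k^\ast$ from $(\psi_1^\ast,\psi_2^\ast)$ by \eqref{eq:phi_from_spinors} and show that the $\mathfrak{nil}_1^3$-valued $1$-form $\alpha^\ast=\sum_k(\varphi_k^\ast e_k\,dz+\overline{\varphi_k^\ast}e_k\,d\bar z)$ satisfies the Maurer--Cartan equation $d\alpha^\ast+\alpha^\ast\wedge\alpha^\ast=0$. On simply connected $\mathbb D$ this integrates to $f^\ast$ with $(f^\ast)^{-1}df^\ast=\alpha^\ast$, unique up to left translation. The conformality relations \eqref{eq:conformal_rel} are automatic from the spinor parametrization. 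For integrability I will use the Dirac equation for $\tilde\psi^\ast$: the compatibility condition reduces to $\Im(\varphi^\ast_{k\bar z})$-type identities that, in \cite{Brander&Kobayashi2025}, are exactly equivalent to \eqref{eq:dirac} with real potential. The real potential is $e^{w^\ast/2}=4|B|/h>0$. I also need the dual to be maximal, i.e.\ $H^\ast=0$. For this I match the potential formula \eqref{eq:UV_B}: $e^{w^\ast/2}=\tfrac{H^\ast}{2}e^{u^\ast/2}i-\tfrac14 h^\ast$. The left side is real, so $H^\ast=0$ provided $e^{w^\ast/2}=-h^\ast/4$. This holds up to the sign or orientation convention, which I fix by the choice of $N^\ast$.

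The invariants in (1) are then direct substitution into \eqref{eq:u_h_spinors}:
\[
|\psi_1^\ast|^2\mp|\psi_2^\ast|^2=\tfrac{16|B|}{h^2}\bigl(|\psi_2|^2\mp|\psi_1|^2\bigr).
\]
This gives $e^{u^\ast}=16^2|B|^2e^u/h^4$ and $h^\ast=16|B|/h$, where the latter is up to the sign coming from $|\psi_2|^2\mp|\psi_1|^2$ and is handled by the normal choice. For the Gauss map, \eqref{eq:gauss_spinor} gives
\[
g^\ast=\psi_1^\ast/\overline{\psi_2^\ast}=\frac{\sqrt{-B}}{\overline{\sqrt{-\bar B}}}\cdot\frac{\psi_2}{\overline{\psi_1}}=\frac{\psi_2}{\overline{\psi_1}},
\]
which equals $1/\bar g$. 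This is the reflection across $\mathbb S^1$, swapping $\mathbb H^2_\pm$ and sending $f^{-1}N\mapsto -f^{-1}N$ by \eqref{eq:gauss_spinor}. So the normal agrees up to sign, and choosing $N^\ast$ with the opposite orientation yields $g^\ast=g$. Finally, $B^\ast=B$ is already recorded in Lemma~\ref{lemma:maximal}; alternatively, I can verify it from $B^\ast=\tfrac12(\psi^\ast_1\overline{\psi^\ast_2}_z-\overline{\psi^\ast_2}\psi^\ast_{1z})$, where the derivatives of the scalar factors cancel because $\sqrt{-B}/\overline{\sqrt{-\bar B}}$ is unimodular and holomorphic-compatible.

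For (2), I apply the construction twice:
\[
\psi_1^{\ast\ast}=\frac{4\sqrt{-B}}{h^\ast}\psi_2^\ast=\frac{4\sqrt{-B}}{16|B|/h}\cdot\frac{4\sqrt{-\bar B}}{h}\psi_1=\frac{\sqrt{-B}\sqrt{-\bar B}}{|B|}\psi_1,
\]
which is a unimodular constant multiple of $\psi_1$, taking a branch with $\sqrt{-\bar B}=\overline{\sqrt{-B}}$ up to a fixed sign. The same holds for $\psi_2$. A constant phase on the spinors induces a rigid motion (a rotation about $e_3$) on $f$, so $f^{\ast\ast}=f$ up to a rigid motion. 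At zeros of $B$, $\psi^\ast_k$ vanish and hence $e^{u^\ast}=0$ there, while the $\varphi^\ast$ stay smooth; since $B$ is holomorphic these zeros are isolated, so they are branch points.

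The main obstacle is the sign and branch bookkeeping. I need $h$ to have a sign making $4|B|/h$ consistent with $e^{w^\ast/2}=-h^\ast/4$. I also need the square-root branches to give exactly $B^\ast=B$ rather than $-B$, and to reconcile $g^\ast=1/\bar g$ with the normalization $g^\ast=g$. I would first fix conventions by checking the catenoid/helicoid-type example.
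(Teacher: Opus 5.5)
Your outline is the paper's: Lemma~\ref{lemma:maximal}, then reality of the Dirac potential, then substitution into \eqref{eq:u_h_spinors} and \eqref{eq:gauss_spinor}, then the double dual. The quantities you actually compute are correct:
$e^{u^\ast}$;
$h^\ast=2(|\psi_1^\ast|^2+|\psi_2^\ast|^2)=16|B|/h$;
the spinor Gauss map $1/\bar g$, i.e.\ normal $-f^{-1}N$, so flipping $N^\ast$ gives $g^\ast=g$;
and $B^\ast=B$ via $\psi_1^\ast\overline{\psi_2^\ast}_z-\overline{\psi_2^\ast}\psi^\ast_{1z}=a^2(\psi_2\overline{\psi_1}_z-\overline{\psi_1}\psi_{2z})$ with $a=4\sqrt{-B}/h$, which already presupposes the branch discussed below.

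The gap is in the step that carries the theorem, the existence of $f^\ast$. It is false that integrability is equivalent to the Dirac equation with a real potential. Write $\theta^k=\varphi_k\,dz+\overline{\varphi_k}\,d\bar z$; since $[e_1,e_2]=e_3$, the structure equations are $d\theta^1=d\theta^2=0$ and $d\theta^3=-\theta^1\wedge\theta^2$. If $\psi_{2z}=-U\psi_1$ and $\psi_{1\bar z}=U\psi_2$, the first two hold for every $U$. The third reads $2\,\Re U\,(|\psi_2|^2-|\psi_1|^2)=|\psi_1|^4-|\psi_2|^4$, i.e.\ $\Re U=-\tfrac12(|\psi_1|^2+|\psi_2|^2)=-\tfrac14 h$. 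So $e^{w^\ast/2}=-h^\ast/4$ is not a sign or orientation convention you can fix by choosing $N^\ast$. It is the integrability condition itself, and it depends only on the spinors and the coordinate $z$. A potential $+4|B|/h$ would mean the dual spinors define no surface at all.

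What closes the gap is the branch of the square root, not the normal. Take $\sqrt{-\overline B}=\overline{\sqrt{-B}}$ and redo the lemma's computation with $e^{w/2}=-h/4$ and $\tfrac12 w_z=(\log h)_z$. You get $\psi^\ast_{2z}=\tfrac{4|B|}{h}\psi^\ast_1$ and $\psi^\ast_{1\bar z}=-\tfrac{4|B|}{h}\psi^\ast_2$, so $U^\ast=V^\ast=-\tfrac{4|B|}{h}=-\tfrac14h^\ast$. This is exactly the required real part with zero imaginary part, so $f^\ast$ exists and $H^\ast=0$. Moreover, the $\psi_2^\ast$-coefficient of $\psi^\ast_{1z}$ is $-\tfrac{hB}{4|B|}=B\,e^{-w^\ast/2}$, giving $B^\ast=B$.

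The opposite branch replaces $\psi_2^\ast$ by $-\psi_2^\ast$. That flips $\varphi_3^\ast$ alone and violates $d\theta^3=-\theta^1\wedge\theta^2$ at every regular point of $f^\ast$. So the ``fixed sign'' you allow in the double-dual step is not free; with the correct branch, $\psi^{\ast\ast}=\psi$ exactly.

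The paper's proof also infers maximality from the reality of $U^\ast$ alone. Its display $\tfrac{H^\ast}{2}ie^{u^\ast/2}-\tfrac14h^\ast=\tfrac{4|B|}{h}$ is not consistent with $h^\ast=16|B|/h$. So this sign must be settled by the branch computation above, not by an orientation choice or by testing an example.
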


\begin{proof}
  
By using Lemma~\ref{lemma:maximal}, the vector of generating spinors $\tilde\psi^{\ast}=(\psi_1^{\ast},\psi_2^{\ast})$ defines a conformal immersion
$f^{\ast}$ in $\mathrm{Nil_1^3}$.  The Dirac potential
\[
U^{\ast}=V^{\ast}=e^{w^{\ast}/2}
  = \frac{H^{\ast}}{2}ie^{u^{\ast}/2}
    -\frac{1}{4}h^{\ast}
  = \frac{4|B|}{h}\in \mathbb{R}.
\]
Thus $f^{\ast}$ is maximal surface in $\operatorname{Nil_1^3}$.
Moreover, the tangent planes of $f$ and $f^{\ast}$ coincide, so their left-translated unit normals $f^{-1}N$ and $(f^{\ast})^{-1}N^{\ast}$ agree
up to sign.

(1) The metric follows from the definition of
$\psi_1^{\ast}$ and $\psi_2^{\ast}$, and $h^\ast$ follows from the definition of support function of $f^\ast$. Since we choose $(f^{\ast})^{-1}N^{\ast}=f^{-1}N$, therefore $g^{\ast}=g$. 

(2) The double-dual spinors are
$$
\psi^{\ast\ast}_1
 = {\frac{4\sqrt{-B^{\ast}}}{h^{\ast}}}\,\psi^{\ast}_2=\psi_1,
\qquad
\psi^{\ast\ast}_2
 = {\frac{4\sqrt{-B^{\ast}}}{h^{\ast}}}\,\psi^{\ast}_1=\psi_2.
$$
Hence $(\psi^{\ast\ast}_1,\psi^{\ast\ast}_2)$ defines the original surface $f$ up to rigid
motion, and since $e^{u^{\ast}}
        = \frac{16^2|B|^2}{h^4}\,e^u$, it is clear that the zeros of $B$ are branch points of $f^\ast$.
\end{proof}


In the following subsection, we present the Sym formula for dual maximal surfaces in
$\mathrm{Nil}_1^3$. Our approach relies on the loop group method developed for the study
of maximal surfaces in $\mathrm{Nil}_1^3$. (see \cite{Brander&Kobayashi2025})

\subsection{Sym formula for dual maximal surface} \label{subsec: Sym formula for dual maximal surface}
We begin by identifying the Lie algebra $\mathfrak{nil}_1^3$ with $\mathfrak{su}_2$ (Lie algebra of $\operatorname{SU}_2$) as
real vector spaces. In $\mathfrak{su}_2$, we fix the basis
\begin{equation}\label{eq:basis_su_2}
E_1=\frac12\begin{pmatrix}0&-i\\-i&0\end{pmatrix},\qquad
E_2=\frac12\begin{pmatrix}0&1\\-1&0\end{pmatrix},\qquad
E_3=\frac12\begin{pmatrix}i&0\\0&-i\end{pmatrix},
\end{equation}
which forms an orthogonal basis of $\mathfrak{su}_2$.
Then a linear isomorphism $\Xi \colon \mathfrak{su}_2 \longrightarrow \mathfrak{nil}_1^3$ is given by $x_1 E_1 + x_2 E_2 + x_3 E_3 \longmapsto x_1 e_1 + x_2 e_2 + x_3 e_3.$ Define a smooth bijection $\Xi_{\mathrm{nil}}:\mathfrak{su}_{2}\longrightarrow\mathrm{Nil_1^3}$ by
$\Xi_{\mathrm{nil}}:=\exp\circ\Xi$;
$
\Xi_{\mathrm{nil}}(x_1E_1 + x_2E_2 + x_3E_3)=(x_1, x_2, x_3).
$

Now by Theorem~\ref{thm:3.2_brander_maximal}, the associated family of
Maurer-Cartan forms $\alpha^\lambda$ in \eqref{eq:maure_Cartan_H_nonzero} for maximal surfaces,
can be simplified as follows:
\begin{equation}\label{eq:maure_Cartan_maximal}
\alpha^\lambda = U^\lambda\,dz + V^\lambda\,d\bar z,
\end{equation}
with
\begin{equation}\label{eq:UV_Maurer_Cartan_maximal}
U^\lambda =
\begin{pmatrix}
\frac14 w_z & -\lambda^{-1} e^{w/2}\\[6pt]
\lambda^{-1} B e^{-w/2} & -\frac14 w_z
\end{pmatrix},
\qquad
V^\lambda =
\begin{pmatrix}
-\frac14 w_{\bar z} & -\lambda\overline{B} e^{-w/2}\\[6pt]
\lambda e^{w/2} & \frac14 w_{\bar z}
\end{pmatrix}.
\end{equation}
Let $F$ be the
corresponding $\mathrm{SU}_{2}$ valued solution of the equation $F^{-1}dF=\alpha^\lambda$
$(\lambda\in \mathbb S^1)$, where $\alpha^\lambda$ is defined by \eqref{eq:maure_Cartan_maximal}. We call $F$
the \emph{extended frame of the maximal surface} $f$. In particular, $F$ can be represented as:
\begin{equation}\label{eq:extendedframe_maximal}
F(\lambda)=\frac{1}{\sqrt{| \psi_1(\lambda)|^2 + | \psi_2(\lambda)|^2}}
\begin{pmatrix}
\psi_1(\lambda) & \psi_2(\lambda)\\[6pt]
-\overline{\psi_2(\lambda)} & \overline{\psi_1(\lambda)}
\end{pmatrix},
\end{equation}
where $\psi_1(\lambda)$ and $\psi_2(\lambda)$ denote families of functions such that
$\psi_1(\lambda)|_{\lambda=1}=\psi_1$ and $\psi_2(\lambda)|_{\lambda=1}=\psi_2$ are the generating spinors
of the maximal surface $f$.

We now prove the Sym formula.

\begin{theorem}\label{thm:Sym_formula_dualmaximal}
Let $F$ be the extended frame for some maximal surface $f$ on $\mathbb D$ with
$B\not\equiv0$ in $\mathrm{Nil_1^3}$, and define $m_\pm$ and $N_m$ respectively by
$$
m_\pm = \pm\,i\,\lambda(\partial_\lambda F)F^{-1}- N_m,\qquad
N_m=\tfrac{i}{2}\operatorname{Ad}(F)\sigma_3,
$$
where $\sigma_3=\begin{pmatrix}
1 & 0\\
0 & -1
\end{pmatrix}$. Moreover, define maps $f^\lambda_\pm:\mathbb D\longrightarrow\mathrm{Nil_1^3}$ by $f^\lambda_\pm:=\Xi_{\mathrm{nil}}\circ\widehat f^\lambda_\pm$
with
\begin{equation}\label{eq:sym_formula_maximal}
\widehat f^\lambda_\pm=
\Bigl(m_\pm^{\mathrm{o}} - \tfrac{i}{2}\lambda(\partial_\lambda m_\pm)^{\mathrm{d}}\Bigr)\Big|_{\lambda\in \mathbb S^1},
\end{equation}
where superscripts ``o'' and ``d'' denote off-diagonal and diagonal parts, respectively.
Then, for each $\lambda\in \mathbb S^1$, the maps $f^\lambda_\pm$ are maximal surfaces in $\mathrm{Nil_1^3}$
and $N_m$ is the normal Gauss map of $f^\lambda_\pm$. In particular, $f^\lambda_-|_{\lambda=1}$
is the original maximal surface $f$ up to a rigid motion and $f^\lambda_+|_{\lambda=1}$ is the
dual maximal surface $f^\ast$ of $f$. Moreover, for each $\lambda\in \mathbb S^1$, $f^\lambda_-$ and $f^\lambda_+$ are dual to each other.
\end{theorem}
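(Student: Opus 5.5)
We will prove the statement by computing the Maurer–Cartan form of $\widehat f^\lambda_\pm$ from $F^{-1}dF=\alpha^\lambda$ and comparing it with the spinor data of the original and dual surfaces. We rely on the Sym formula of \cite{Brander&Kobayashi2025}. That reference establishes that $f^\lambda_-$ is, for every $\lambda\in\mathbb S^1$, a maximal surface with normal Gauss map $N_m$, and that $f^\lambda_-|_{\lambda=1}=f$ up to rigid motion. We will re-derive the relevant identities in a form that also covers the sign $+$. The starting point is the differential of the logarithmic derivative. Writing $\alpha^\lambda=U^\lambda dz+V^\lambda d\bar z$, one has
\[
d\bigl(\lambda(\partial_\lambda F)F^{-1}\bigr)=\operatorname{Ad}(F)\bigl(\lambda\partial_\lambda\alpha^\lambda\bigr),
\qquad
dN_m=\operatorname{Ad}(F)\bigl[\alpha^\lambda,\tfrac{i}{2}\sigma_3\bigr].
\]
Here $\lambda\partial_\lambda\alpha^\lambda$ is purely off-diagonal: it equals $-(\alpha^\lambda)^{\mathrm o}_{dz}+(\alpha^\lambda)^{\mathrm o}_{d\bar z}$. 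The commutator with $\tfrac i2\sigma_3$ also only sees the off-diagonal part, multiplying the $(1,2)$ entry by $-i$ and the $(2,1)$ entry by $+i$. Combining these, $dm_\pm=\operatorname{Ad}(F)\,\beta_\pm$, where $\beta_\pm$ is an explicit off-diagonal $\mathfrak{su}_2$-valued form. For $\beta_-$ the $dz$-part keeps only the entry $-\lambda^{-1}e^{w/2}$ (the $B$-entry cancels). For $\beta_+$ the $dz$-part keeps only the entry $\lambda^{-1}Be^{-w/2}$, and the $\bar z$-parts behave analogously.

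The next step is to read off the spinors. The $(1,2)$ entry of $\operatorname{Ad}(F)X$ for off-diagonal $X$ is quadratic in the entries of $F$, that is, in $\psi_1(\lambda)$ and $\psi_2(\lambda)$ up to the normalization. Projecting $\operatorname{Ad}(F)\beta_\pm$ to the $E_1,E_2,E_3$ coordinates, $\beta_-$ produces
\[
\varphi_1=(\overline{\psi_2})^2-\psi_1^2,\qquad \varphi_2=i\bigl((\overline{\psi_2})^2+\psi_1^2\bigr),\qquad \varphi_3=2i\psi_1\overline{\psi_2},
\]
as in \eqref{eq:phi_from_spinors}. Up to the scalar $e^{w/2}$, this identifies the spinors with the columns of $F$ scaled by $e^{w/4}$. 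For $\beta_+$ the coefficient $e^{w/2}$ is replaced by $\lambda^{-1}Be^{-w/2}$, and the roles of the two columns of $F$ are interchanged. After absorbing the $\lambda$-phases and using $e^{w/2}=-h/4$, the resulting spinors become
\[
\frac{4\sqrt{-B}}{h}\,\psi_2,\qquad \frac{4\sqrt{-\overline B}}{h}\,\psi_1,
\]
which are exactly the dual spinors \eqref{dual_spinors} (the phase of $\sqrt{-B}$ gives a rigid motion). This step justifies the lower-order part of $f^\lambda_\pm$, namely the $e_1,e_2$ components, since $(m_\pm)^{\mathrm o}$ integrates the off-diagonal part.

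The main obstacle is the $e_3$-component. Because of the Heisenberg twist, $\Xi_{\mathrm{nil}}$ is not a homomorphism of the additive structure, so the third coordinate must satisfy
\[
dx_3=\varphi_3\,dz+\overline{\varphi_3}\,d\bar z-\tfrac12\bigl(x_2\,dx_1-x_1\,dx_2\bigr).
\]
The correction term $-\tfrac i2\lambda\partial_\lambda(m_\pm)^{\mathrm d}$ in \eqref{eq:sym_formula_maximal} is what achieves this. We will differentiate it using $d m_\pm=\operatorname{Ad}(F)\beta_\pm$ and $\partial_\lambda F=F\,\lambda^{-1}(\ldots)$. The diagonal part of $\lambda\partial_\lambda\operatorname{Ad}(F)\beta_\pm$ should split as a commutator $[m_\pm^{\mathrm o},dm_\pm^{\mathrm o}]$ plus the term $\varphi_3$. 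This is the same bookkeeping as in \cite{Brander&Kobayashi2025}, and we must check that it is insensitive to the sign swap. The sign enters only through $\beta_\pm$, and the identity used there is linear in $\beta$, so we expect it to go through verbatim.

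With the Maurer–Cartan forms identified, the generating spinors of $f^\lambda_+$ are the dual spinors of $f^\lambda_-$ for each $\lambda$. The potential $e^{w/2}$ is real at every $\lambda$, so Theorem~\ref{maximal main theorem} applied at each $\lambda$ gives that $f^\lambda_+$ is the dual maximal surface of $f^\lambda_-$, and in particular $f^\lambda_+|_{\lambda=1}=f^\ast$. Since $F$ is the same frame for both surfaces, the normal $\operatorname{Ad}(F)\tfrac i2\sigma_3=N_m$ is shared by both, consistent with $g^\ast=g$.
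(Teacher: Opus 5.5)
Your outline follows the paper's proof. You compute $dm_\pm=\operatorname{Ad}(F)\beta_\pm$, note that $\beta_+$ keeps only the $\lambda^{-1}Be^{-w/2}$ entry, read off the off-diagonal part of $\partial_z m_+$ as $\varphi_1^+E_1+\varphi_2^+E_2$ in terms of the dual spinors, and handle the $E_3$-component through the diagonal correction.

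The gap is the step you defer with ``we expect it to go through verbatim'': it does not go through. The sign is not confined to $\beta_\pm$. It also enters through $\lambda(\partial_\lambda F)F^{-1}=\mp i\,(m_\pm+N_m)$, and through $[\tfrac i2\sigma_3,\beta_\pm]=\mp i\,\beta_\pm$, since $\beta_+$ is lower triangular and $\beta_-$ upper triangular. Doing the computation, as the paper does, gives
\[
\partial_z\Bigl(-\tfrac i2\lambda\partial_\lambda m_\pm\Bigr)^{\mathrm d}
=i(\partial_z m_\pm)^{\mathrm d}\mp\tfrac12\bigl[m_\pm^{\mathrm o},\partial_z m_\pm^{\mathrm o}\bigr],
\qquad (\partial_z m_+)^{\mathrm d}=i\varphi_3^+E_3 .
\]
Write $x_1,x_2,x_3$ for the $E_1,E_2,E_3$-coordinates of $\widehat f^\lambda_+$ and use $[E_1,E_2]=E_3$. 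Then $\partial_z x_3=-\bigl(\varphi_3^+-\tfrac12(x_2\varphi_1^+-x_1\varphi_2^+)\bigr)$. This is exactly the negative of the relation $\partial_z x_3=\varphi_3^+-\tfrac12(x_2\partial_z x_1-x_1\partial_z x_2)$, which you correctly identified as necessary.

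For $m_-$ the commutator enters with $+\tfrac12$ and everything is consistent. For $m_+$ the twist comes out reversed. So $\Xi_{\mathrm{nil}}\circ\widehat f^\lambda_+$ does not have Maurer--Cartan form $\sum_k\varphi_k^+e_k$, and in general it is not even conformal. Concretely, take the frame $F_1$ of the paper's first example at $\lambda=1$, with $z=x+iy$. Then $\Xi_{\mathrm{nil}}\circ\widehat f_+=(\sin 4x,\,-4y,\,-2y\sin 4x)$. Its third Maurer--Cartan component is $-16y\cos 4x\,dx$, and its induced metric is $16\cos^2 4x\,(1-16y^2)\,dx^2+16\,dy^2$, which is not conformal. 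By contrast, $m_-$ gives $(\sin 4x,4y,-2y\sin 4x)$ with metric $16\cos^2 4x\,(dx^2+dy^2)$.

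The paper's own display for $\partial_z\bigl(-\tfrac i2\lambda\partial_\lambda m_+\bigr)^{\mathrm d}$ carries the same overall minus sign. Its next line, identifying $(f^\lambda_+)^{-1}\partial_z f^\lambda_+$ with $\sum_k\varphi_k^+e_k$, does not follow from that display. Nor can the sign be absorbed into a rigid motion: $(x_1,x_2,x_3)\mapsto(x_1,x_2,-x_3)$ reverses the twist and is not an isometry of $\mathrm{Nil}_1^3$.

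To make your argument work you need a modified formula. Either reverse the sign of the diagonal correction for $m_+$, or equivalently, up to the isometry $(x_1,x_2,x_3)\mapsto(-x_1,-x_2,x_3)$, use $-m_+=-i\lambda(\partial_\lambda F)F^{-1}+N_m$. This is the analogue of the convention in Section~\ref{sec:Dual_timelike_minimal_surfaces}, where the commutator enters with $+\tfrac12$ for both signs. With $-m_+$, the Sym image has Maurer--Cartan form $-\varphi_1^+e_1-\varphi_2^+e_2+\varphi_3^+e_3$, that is, spinors $(i\psi_1^\ast,i\psi_2^\ast)$. Your concluding step then goes through: the Dirac potential is real and independent of $\lambda$, and Theorem~\ref{maximal main theorem} applies at each $\lambda$.
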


\begin{proof}
It is proven that $f^\lambda_-|_{\lambda=1}$ is a maximal surface in $\operatorname{Nil_1^3}$ in \cite[Theorem 3.4]{Brander&Kobayashi2025}.
We only show for $f^\lambda_+|_{\lambda=1}$, and both are dual to each other for each $\lambda\in \mathbb S^1$. From a direct calculation, we have
$$
\partial_z m_+ = \operatorname{Ad}(F)\Bigl(i\lambda\partial_\lambda U_\lambda - \frac{i}{2}[U^\lambda,\sigma_3]\Bigr)
= -2i\lambda^{-1} B e^{-w/2}\operatorname{Ad}(F)\begin{pmatrix}0&0\\1&0\end{pmatrix}.
$$
Using the generating spinors and the support function $h$, this becomes
\[
\partial_z m_+ = \frac{16i\lambda^{-1}B}{h^2}
\begin{pmatrix}
\psi_2(\lambda)\overline{\psi_1(\lambda)} & -\psi_2(\lambda)^2\\[4pt]
\overline{\psi_1(\lambda)^2} & -\psi_2(\lambda)\overline{\psi_1(\lambda)}
\end{pmatrix}.
\]
From the dual
generating spinors as in \eqref{dual_spinors},
we have
\begin{equation}\label{eq:partial_m+_maximal}
\partial_z m_+ = \varphi_1^+(\lambda)E_1 + \varphi_2^+(\lambda)E_2 + i\,\varphi_3^+(\lambda)E_3,
\end{equation}
where
\begin{align*}
\varphi_1^+(\lambda)&=\lambda^{-1}\bigl((\overline{\psi_2}^\ast(\lambda))^2 - (\psi_1^\ast(\lambda))^2\bigr),\\
\varphi_2^+(\lambda)&=i\lambda^{-1}\bigl((\overline{\psi_2}^\ast(\lambda))^2 + (\psi_1^\ast(\lambda))^2\bigr),\\
\varphi_3^+(\lambda)&=2i\lambda^{-1}\psi_1^\ast(\lambda)\overline{\psi_2}^\ast(\lambda).
\end{align*}
A computation of $\partial_z(i\lambda\partial_\lambda m_+)$ combined with the diagonal part
analysis yields
\begin{equation}\label{eq:2ndpart_symformula_maximal}
\begin{aligned}
\partial_z\Bigl(-\tfrac{i}{2}\lambda(\partial_\lambda m_+)\Bigr)^d
&=\frac{i}{2}(\partial_z m_+)^d-\frac{1}{2}[m_++N_m,\,\partial_z m_+]^d\\
&=i(\partial_z m_+)^d-\frac{1}{2}[m_+,\,\partial_z m_+]^d\\
&=-\Bigl(\varphi_3^+(\lambda) - \frac12\varphi_1^+(\lambda)\int\varphi_2^+(\lambda)\,dz
 + \frac12\varphi_2^+(\lambda)\int\varphi_1^+(\lambda)\,dz\Bigr)E_3.
 \end{aligned}
\end{equation}
Combining \eqref{eq:partial_m+_maximal} and \eqref{eq:2ndpart_symformula_maximal} we obtain
\[
\partial_z\widehat f^\lambda_+ = \varphi_1^+(\lambda)E_1 + \varphi_2^+(\lambda)E_2
- \Bigl(\varphi_3^+(\lambda) - \frac12\varphi_1^+(\lambda)\int\varphi_2^+(\lambda)\,dz
 + \frac12\varphi_2^+(\lambda)\int\varphi_1^+(\lambda)\,dz\Bigr)E_3.
\]
Using the identification \eqref{eq:basis_su_2} and the left translation by $(f^\lambda_+)^{-1}$, i.e.
$$
(f^\lambda_+)^{-1}\partial_z f^\lambda_+ = \varphi_1^+(\lambda)e_1 + \varphi_2^+(\lambda)e_2 + \varphi_3^+(\lambda)e_3,
$$
we see that $\psi_1^\ast(\lambda)$ and $\psi_2^\ast(\lambda)$ are spinors for $f^\lambda_+$ for
each $\lambda\in \mathbb S^1$. In particular, the function
\[
-\frac{1}{2}\bigl(|\psi_1^\ast(\lambda)|^2 + |\psi_2^\ast(\lambda)|^2\bigr) = e^{w^\ast/2} = \frac{4|B|}{h}
\]
is independent of $\lambda$, so the mean curvature $H=0$. Moreover, the conformal factor of the induced metric of $f^\lambda_+$ is given by
\[
e^{u^\ast}=4\bigl(|\psi_1^\ast(\lambda)|^2 - |\psi_2^\ast(\lambda)|^2\bigr)^2
= \frac{16^2|B|^2}{h^4}\,e^u.
\]
Thus, for each $\lambda \in \mathbb{S}^1$ with $B(z)\neq 0$,
the immersion $f^\lambda_+$ defines a maximal surface in $\mathrm{Nil}_1^3$.

\end{proof}

\begin{remark}
It can be observed that replacing $m_+$ by $-m_+$ in Theorem~\ref{thm:Sym_formula_dualmaximal} also gives a dual to the given maximal surface, and the two dual surfaces are the same up to rigidity.
\end{remark}

\textbf{Example:} Consider two hyperbolic paraboloids $x_3=-\tfrac{x_1x_2}{2}$ and $x_3=\tfrac{x_1x_2}{2}$  and their corresponding holomorphic potentials defined as
\[
\zeta_1 = \lambda^{-1}
\begin{pmatrix}
0 & -1 \\
1 & 0
\end{pmatrix} dz,\quad \text{ and }\quad\zeta_2 = \lambda^{-1}
\begin{pmatrix}
0 & 1 \\
1 & 0
\end{pmatrix} dz.
\]

Let $\Phi_1$ and $\Phi_2$ be the solutions of the ODEs $d\Phi_1=\Phi_1\zeta_1$ and $d\Phi_2=\Phi_2\zeta_2$ with the initial conditions $\Phi_1(z=0)=\mathrm{id}$ and $\Phi_2(z=0)=\mathrm{id}$, respectively.
Then,  the extended frames $F_1$ and $F_2$ from the Iwasawa decompositions $\Phi_1=F_1V_1$ and $\Phi_2=F_2V_2$, respectively, are given by
\begin{align*}
F_1 &=
\begin{pmatrix}
\cos (\lambda^{-1}z+\lambda\bar z) &
 -\sin (\lambda^{-1}z+\lambda\bar z) \\[6pt]
 \sin (\lambda^{-1}z+\lambda\bar z) &
\cos (\lambda^{-1}z+\lambda\bar z
\end{pmatrix},\\
F_2 &=
\begin{pmatrix}
\cosh (\lambda^{-1}z-\lambda\bar z) &
 \sinh (\lambda^{-1}z-\lambda\bar z) \\[6pt]
 \sinh (\lambda^{-1}z-\lambda\bar z) &
\cosh (\lambda^{-1}z-\lambda\bar z
\end{pmatrix}.
\end{align*}

By applying Sym formula as in \eqref{eq:sym_formula_maximal} to both extended frames $F_1$ and $F_2$, the respective family of maximal surfaces $f_{1-}^\mu$ and $f_{2-}^\mu$  in $\mathrm{Nil}_1^3$ are given by
\begin{align*}
 f_{1-}^\lambda & =
\bigg(
 \sin (2(\lambda^{-1}z+\lambda\bar z)),\,-2i(\lambda^{-1}z-\lambda\bar z),
\, i(\lambda^{-1}z-\lambda\bar z)
\sin (2i(\lambda^{-1}z+\lambda\bar z))
\bigg),\\
f_{2-}^\lambda & =
\bigg(
-2(\lambda^{-1}z+\lambda\bar z),\, \sinh (2i(\lambda^{-1}z-\lambda\bar z)),
\, -(\lambda^{-1}z+\lambda\bar z)
\sinh (2i(\lambda^{-1}z-\lambda\bar z))
\bigg).
\end{align*}
For each $\mu=e^{i't}\in\mathbb S_1^1,$ $f_{1-}^\mu$ and $f_{2-}^\mu$ are parts of the hyperbolic paraboloids $x_3=-\tfrac{x_1x_2}{2}$ and $x_3=\tfrac{x_1x_2}{2}$ respectively. From Sym formula as in \eqref{eq:sym_formula_maximal}, the corresponding dual surfaces are
\begin{align*}
    f_{1+}^\lambda & =
\bigg(
 \sin (2(\lambda^{-1}z+\lambda\bar z)),\,2i(\lambda^{-1}z-\lambda\bar z),
\, i(\lambda^{-1}z-\lambda\bar z)
\sin (2i(\lambda^{-1}z+\lambda\bar z))
\bigg),\\
f_{2+}^\lambda & =
\left(
2(\lambda^{-1}z+\lambda\bar z),\, \sinh (2i(\lambda^{-1}z-\lambda\bar z)),
\, -(\lambda^{-1}z+\lambda\bar z)
\sinh (2i(\lambda^{-1}z-\lambda\bar z))
\right).
\end{align*}
Then  $f_{1+}^\mu$ and $f_{2+}^\mu$ are the parts of the  hyperbolic paraboloids $x_3=\tfrac{x_1x_2}{2}$ and $x_3=-\tfrac{x_1x_2}{2}$, respectively.

\subsection{Singularities for maximal surfaces}\label{subsec:sing_maximal}
It is natural to consider maximal surfaces in $\operatorname{Nil}_1^3$ that admit
singularities.
As shown in \cite{Brander&Kobayashi2025}, the induced metric $I$ of a maximal immersion
$f$, expressed in terms of the normal Gauss map $g$, can be written as
\[
I = 4\bigl(|\psi_1|^2 - |\psi_2|^2\bigr)^2 |dz|^2
  = 4\,|\bar g_z|^2 \frac{(1 - |g|^2)^2}{(1 + |g|^2)^4}\,|dz|^2.
\]
It follows that singularities occur precisely at points where either $|g| = 1$ or
$g_{\bar z} = 0$.
Moreover, points at which the Gauss map $g$ is holomorphic are always degenerate
singularities.
Any maximal surface with singularities whose normal Gauss map
$g \colon M \to \mathbb{C} \cup \{\infty\}$ is nowhere holomorphic is referred to as a
\emph{generalized maximal surface}.
Also note that the generic singularities of generalized maximal surfaces are cuspidal edges,
swallowtails, and cuspidal cross caps; see \cite{Brander&Kobayashi2025} for details.

 As in the preceding subsection, a generalized maximal surface can be constructed from generating spinors. From Theorem~\ref{maximal main theorem}, the maximal surface $f$ and its dual $f^\ast$ share
the same normal Gauss map (upon choosing the same left-translated unit normal) and the same Abresch-Rosenberg differential.
Consequently, analogous properties hold for generalized maximal surfaces. The correspondence between the generic singularities of a generalized maximal surface and those of its dual follows from Lemma~4.3 and Theorem~4.4 in \cite{Brander&Kobayashi2025}.
Hence, we obtain the following result.


\begin{theorem}
     Let $f$ be a generalized maximal surface with nonvanishing Abresch-Rosenberg differential, and $f^\ast$ be its dual surface. Then both surfaces $f$ and $f^\ast$ have the same singularity set and the same generic singularities.
\end{theorem}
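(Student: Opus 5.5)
The plan is to reduce everything to the normal Gauss map, which $f$ and $f^\ast$ share. By Theorem~\ref{maximal main theorem}, after choosing $(f^\ast)^{-1}N^\ast=f^{-1}N$ we have $g^\ast=g$ and $B^\ast=B$. The relation $g^\ast=g$ is purely algebraic: from \eqref{dual_spinors},
\[
\frac{\psi_1^\ast}{\overline{\psi_2^\ast}}=\frac{\sqrt{-B}\,\psi_2}{\overline{\sqrt{-\overline B}}\,\overline{\psi_1}}=\frac{\psi_2}{\overline{\psi_1}} .
\]
One has to check that this equals $g$ with the chosen orientation, or equivalently $1/\bar g$ with the other orientation; either choice gives the same unit normal up to sign. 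Since the relation does not rely on $|\psi_1|\neq|\psi_2|$, it persists for generalized maximal surfaces built from spinors. The hypothesis that $B$ never vanishes ensures $\psi^\ast$ is defined with no branch points.

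\emph{Singular sets.} The metric of $f^\ast$ is
\[
e^{u^\ast}=4\bigl(|\psi_1^\ast|^2-|\psi_2^\ast|^2\bigr)^2=\frac{16^2|B|^2}{h^4}\cdot 4\bigl(|\psi_2|^2-|\psi_1|^2\bigr)^2=\frac{16^2|B|^2}{h^4}\,e^u .
\]
The support function $h=2(|\psi_1|^2+|\psi_2|^2)$ is positive wherever the spinors do not both vanish, and $B\neq0$. Hence $e^{u^\ast}=0$ exactly where $e^u=0$, so the singular sets coincide. This also matches the Gauss map description $|g|=1$ or $g_{\bar z}=0$, because $g^\ast=g$. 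Both surfaces are generalized, since nowhere-holomorphicity of $g$ is a property of $g$ alone.

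\emph{Types of singularities.} Here I would invoke Lemma~4.3 and Theorem~4.4 of \cite{Brander&Kobayashi2025}. They characterize cuspidal edges, swallowtails and cuspidal cross caps of a generalized maximal surface by criteria depending only on data that $f$ and $f^\ast$ share: the Gauss map $g$ (through $|g|^2-1$ and its derivatives along the singular curve) and presumably the holomorphic differential $B$ or a null direction determined by it. For instance, the singular curve is $\{|g|=1\}$, and one would write the null direction and the functions distinguishing the singularity types (such as $\operatorname{Re}$ or $\operatorname{Im}$ of an expression in $g_z,\bar g_z,B$) from the spinors. One then checks that the substitution $\psi\mapsto\psi^\ast$ leaves these functions unchanged up to the nonvanishing factor $4|B|/h$ or a unimodular factor. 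Nonzero factors do not affect the vanishing conditions, so each criterion holds for $f$ exactly when it holds for $f^\ast$.

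The main obstacle is this last step: verifying that the criteria of \cite{Brander&Kobayashi2025} really are expressed through $(g,B)$ or spinor quantities invariant under dualization, rather than through $h$ or $\psi_1,\psi_2$ separately. If the criteria involve the spinors directly, I would rewrite them using $g=\psi_1/\overline{\psi_2}$ and the Dirac equation \eqref{eq:dirac}, and compare with the dual versions using Lemma~\ref{lemma:maximal}. Note that the dual Dirac potential $4|B|/h$ changes sign relative to $e^{w/2}=-h/4$, so I expect a global sign or orientation reversal that is harmless for the vanishing-type conditions.
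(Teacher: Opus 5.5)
The gap is in your last step, and that step fails rather than merely needing verification. Your overall route is the paper's: shared $g$ (after choosing the normal) and shared $B$, the singular set read off from $e^{u^\ast}=\frac{16^2|B|^2}{h^4}e^u$, then an appeal to Lemma~4.3 and Theorem~4.4 of \cite{Brander&Kobayashi2025}. The singular-set half is fine, and more explicit than the paper's.

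The orientation issue you set aside is the cause. As you computed, the Gauss map of $f^\ast$ built from its own spinors is $\psi_1^\ast/\overline{\psi_2^\ast}=\psi_2/\overline{\psi_1}=1/\bar g$. Reversing $N^\ast$ to get $g^\ast=g$ does not change $\ker df^\ast$, and $\ker df^\ast$ is what the singularity criteria see. Concretely, \eqref{dual_spinors} gives $\varphi_k^\ast=\frac{16B}{h^2}\,\overline{\varphi_k}$, with branches chosen so that $\overline{\sqrt{-\overline B}}=\sqrt{-B}$. Equivalently, $(f^\ast)^{-1}f^\ast_z=\frac{16B}{h^2}\,\overline{f^{-1}f_z}$. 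At a singular point, $f^{-1}f_z=\mu v$ with $v$ real and null. Writing tangent directions as complex numbers, the null direction of $f$ is $\eta\in i\bar\mu\,\mathbb R$, while that of $f^\ast$ is $\eta^\ast\in\bar B\,\bar\eta\,\mathbb R$. So $\eta^\ast$ is the mirror image of $\eta$ in the directions where $B\,dz^2$ is real. This change is antilinear, not a rescaling. Conditions of the form $\operatorname{Re}(\cdot)\neq0$ or $\operatorname{Im}(\cdot)\neq0$ are not invariant even under unimodular factors; only real factors are harmless.

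Now follow this through with the Dirac equation, which gives $g_{\bar z}=-h^2/(8\overline{\psi_2}^{\,2})$ and $g_z=-2B/\overline{\psi_2}^{\,2}$. Set $A:=\bar g^{2}g_z/\overline{g_{\bar z}}$ on $\{|g|=1\}$. Then:
\begin{itemize}
\item $p$ is non-degenerate iff $A\neq-1$;
\item $f$ is a front at $p$ iff $\operatorname{Im}A\neq0$;
\item $\eta$ is transversal to the singular curve iff $\operatorname{Re}A\neq-1$.
\end{itemize}
For $f^\ast$, computed with its own Gauss map $1/\bar g$, the same quantity is $1/A$. Non-degeneracy and the front property therefore transfer, but transversality for $f^\ast$ becomes $\operatorname{Re}A\neq-|A|^2$.

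At a swallowtail of $f$ one has $A=-1+it$ with $t\neq0$. There $f^\ast$ is a non-degenerate front with transversal null direction, i.e.\ a cuspidal edge. Symmetrically, since $f^{\ast\ast}=f$, swallowtails of $f^\ast$ are cuspidal edges of $f$. So no check of the kind you propose can close the argument. The singular sets coincide, but the statement's claim of the same generic singularities does not hold in general. The paper's one-line appeal to \cite{Brander&Kobayashi2025} after normalizing $g^\ast=g$ rests on the same unjustified transfer.

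A smaller point of the same kind: $f^\ast$ is generalized because $1/\bar g$ is nowhere holomorphic, i.e.\ $g_z\neq0$. That follows from $g_z=\frac{16B}{h^2}g_{\bar z}$ and $B\neq0$, not from a property of $g$ alone.
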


\section{Dual Timelike minimal surfaces in $\operatorname{Nil_3}$ and its Sym formula}\label{sec:Dual_timelike_minimal_surfaces}

In this section, we will discuss the dual surfaces of the timelike minimal surfaces in $\mathrm{Nil}_t^3$. For the sake of simplicity, we retain the same notations as in the previous section. 

\subsection{Preliminaries for timelike minimal surfaces}\label{subsec:Preliminaries_timelike_minimal_surfaces}
In this subsection, we recall the necessary background on timelike minimal surfaces in the
Lorentzian Heisenberg group $\operatorname{Nil}_t^3$ from \cite{Kiyohara&Kobayashi_timelike2022}.
We begin with a brief review of para-complex numbers and then discuss the generating
spinors associated with timelike minimal surfaces, together with their normal Gauss maps.

The ring of para-complex numbers denoted by $\mathbb{C'}$  is defined as,
$$\mathbb{C'}:=\left\{ z = x + i'y  \mid x,y \in \mathbb{R},\,(i')^{2} = 1\right\}.$$
We define the real part, imaginary part, and para-conjugation by $\Re z = x, \Im z = y,\text{ and } \bar z = x - i'y.$
Also note that $z\bar{z}=0$ does not imply that $z=0$.
A para-complex number $z = x + i'y$ has a square root in $\mathbb C'$ if and only if
$x + y \ge 0,\, x - y \ge 0.$
In particular, $i'$ has no para-complex square root.  
Similarly, $z = e^{w}$ for some $w \in \mathbb C'$ holds if and only if
$x + y > 0, \, x - y > 0.$

Let $M$ be an orientable, connected $2$-manifold, and let $G$ be a Lorentzian manifold.  
A smooth immersion $f: M \longrightarrow G$ is called \emph{timelike} if the induced metric is Lorentzian. Locally, $M$ admits para-complex coordinates $z = x + i' y$ such that $I = e^{u}\, dz\, d\bar z = e^{u}\big( (dx)^2 - (dy)^2 \big),$
where $z$ is a conformal coordinate and $e^{u}$ the conformal factor. Also, for a para-complex coordinate $z = x + yi'$, we define
\[
\partial_{z} := \frac12(\partial_{x} + i'\partial_{y}), \qquad
\partial_{\bar z} := \frac12(\partial_{x} - i'\partial_{y}).
\]

Let $f:M\longrightarrow\mathrm{Nil}_t^3$ be a conformal immersion from a Lorentz surface $M$ with a local
conformal coordinate $z=x+yi'$ in a simply connected domain $\mathbb D'\subset M$, and orthonormal basis of Lie algebra $\mathfrak{nil}_t^3$ of $\mathrm{Nil}_t^3$ is $\{\varepsilon_1,\varepsilon_2,\varepsilon_3\}$. The left-translated Maurer-Cartan form expands as
\[
f^{-1}df = (f^{-1}f_z)\,dz + (f^{-1}f_{\bar z})\,d\bar z,
\qquad f^{-1}f_z=\sum_{k=1}^3\varphi_k \varepsilon_k,\quad f^{-1}f_{\bar z}=\sum_{k=1}^3\overline{\varphi_k}\varepsilon_k.
\]
The conformality of $f$ gives the algebraic relations
\begin{equation}\label{eq:conformal_rel_timelike}
 -\varphi_1^2+\varphi_2^2+\varphi_3^2 = 0, \qquad  -\varphi_1\overline{\varphi_1}+\varphi_2\overline{\varphi_2}+\varphi_3\overline{\varphi_3} = \tfrac12 e^u \neq 0,
\end{equation}
so the induced metric is $I=e^udzd\overline{z}$.

From \cite[Lemma 2.1]{Kiyohara&Kobayashi_timelike2022}, the system \eqref{eq:conformal_rel_timelike} can be parametrized using a pair of \emph{generating spinors} $\psi_1$ and $\psi_2$, through the expressions
\begin{equation}\label{eq:phi_from_spinors_timelike}
\varphi_1=\epsilon\bigg((\overline{\psi_2})^2+\psi_1^2\bigg),\qquad
\varphi_2=\epsilon i'\bigg((\overline{\psi_2})^2-\psi_1^2\bigg),\qquad
\varphi_3=2\psi_1\overline{\psi_2},
\end{equation}
where $\epsilon=i'$ or $-i'$.
Let $N$ denote the positively oriented unit normal vector field along the immersion $f$ and define the rescaled normal vector $L:= e^{u/2} N$. The \emph{support function} $h$ is then defined by $
h = \langle f^{-1} L, \varepsilon_3 \rangle.$
In terms of the generating spinors $\psi_1$ and $\psi_2$, we obtain the identities
\begin{equation}\label{eq:u_h_spinors for timelike}
e^u = 4\big(\psi_1\overline{\psi_1}+\psi_2\overline{\psi_2}\big)^2,\qquad
h = 2\big(\psi_2\overline{\psi_2}-\psi_1\overline{\psi_1}\big).
\end{equation}

Furthermore, we set the Dirac potential $U=V={\widetilde{\epsilon}}e^{w/2}$ for some $\mathbb C'$ valued function $w$ and $\widetilde{\epsilon}=\{\pm 1,\pm i'\}$, and define the the Abresch-Rosenberg
differential $Q\,dz^2=4B\,dz^2$. Following \cite{Kiyohara&Kobayashi_timelike2022}, these can be expressed as
\begin{equation}\label{eq:UV_B for timelike}
\begin{aligned}
\widetilde{\epsilon}e^{w/2}&=U=V=-\frac{H}{2}e^{u/2}+\frac{i'}{4}h,\\
B &= \frac{2H-i'}{4}
    \bigg(2(\psi_1 \overline{\psi_{2}}_z - \overline{\psi_2} {\psi_1}_z\bigr)
    - 4i'H(\psi_1 \overline{\psi_2})^2\bigg)-\frac{\varphi_3^2}{2H-i'}.
    \end{aligned}
\end{equation}
The generating spinors $\tilde{\psi} = (\psi_1, \psi_2)$ satisfy a \emph{linear spinor system} of the form
\begin{equation}\label{eq:linear_spinor_system for timelike}
\tilde\psi_z=\tilde\psi\,\widetilde U,\qquad
\tilde\psi_{\bar z}=\tilde\psi\,\widetilde V,
\end{equation}
where
\[
\widetilde U=\begin{pmatrix}
\frac12 w_z +\frac 1 2 H_z \widetilde{\epsilon} e^{-w/2+u/2} & -\widetilde{\epsilon} e^{w/2}\\[4pt]
B\widetilde{\epsilon}e^{-w/2} & 0
\end{pmatrix},\qquad
\widetilde V=\begin{pmatrix}
0 & -\overline{B}\widetilde{\epsilon}e^{-w/2}\\[4pt]
\widetilde{\epsilon}e^{w/2} & \frac12 w_{\bar z}+\frac 1 2 H_{\bar z} \widetilde{\epsilon} e^{-w/2+u/2}
\end{pmatrix}.
\]
The second column of the first equation, when combined with the first column of the second equation, yields the nonlinear Dirac equations. In particular, letting $\mathfrak{D}'$ denote the Dirac operator for the conformal immersion in $\operatorname{Nil}_t^3$,
\begin{equation}\label{eq:dirac for timelike}\mathfrak{D}'\begin{pmatrix}\psi_1\\\psi_2\end{pmatrix}
:=\begin{pmatrix}
\partial_z\psi_2 + U\psi_1\\[4pt]
-\partial_{\bar z}\psi_1 + V\psi_2
\end{pmatrix}
=
\begin{pmatrix}0\\0\end{pmatrix},
\qquad U=V=\widetilde{\epsilon}e^{w/2}.
\end{equation}
\begin{remark}
 In particular, if the mean curvature
is zero and the function $h$ has positive values, then $\tilde{\epsilon}=i'$ in \eqref{eq:dirac for timelike}.   
\end{remark}

The left-translated unit normal $f^{-1}N$ lies in the de Sitter two sphere $\widetilde{\mathbb S}_1^2\subset\mathfrak{nil}_t^3$,
where
$$
\widetilde{\mathbb S}_1^2=\{x_1\varepsilon_1+x_2\varepsilon_2+x_3\varepsilon_3\in \mathfrak{nil}_t^3|-x_1^2+x_2^2+x_3^3=1\}.
$$
Now, by composing the stereographic projection $\pi_{\mathfrak{nil}_t^3}^+$ from $(0,0,1) \in \widetilde{\mathbb S}_1^2 \subset \mathfrak{nil}_t^3 $ to $\mathbb{C}'$ with the left-translated unit normal $f^{-1}N$ gives the \emph{normal Gauss map}
$g=\pi_{\mathfrak{nil}_t^3}^+\circ(f^{-1}N):M\longrightarrow\mathbb{C}'$. In terms of generating spinors,
\begin{equation}\label{eq:gauss_spinor for timelike}
g=i'\frac{\overline{\psi_1}}{\psi_2},
\qquad
f^{-1}N=\frac{1}{1-g\overline{g}}\big(2\Re(g)\,\varepsilon_1 +2\Im(g)\,\varepsilon_2 - (1+g\overline{g})\,\varepsilon_3\big).
\end{equation}

Next, we recall the family of Maurer-Cartan forms $\alpha^{\mu}$ parameterized by $\mu\in \mathbb S^{1'}:=\{e^{i't}|\,t\in \mathbb R\}$ as follows:
\begin{equation}\label{maurer-cartan-form-timelike}
\alpha^{\mu}=\widetilde{U}^\mu\, dz+ \widetilde{V}^\mu\, d\overline{z}
\end{equation}
where
\[
\widetilde{U}^\mu=\begin{pmatrix}
\frac14 w_z +\frac 1 2 H_z \widetilde{\epsilon} e^{-w/2+u/2} & -\mu^{-1}\widetilde{\epsilon} e^{w/2}\\[4pt]
\mu^{-1}B\widetilde{\epsilon}e^{-w/2} & -\frac14 w_z
\end{pmatrix},\qquad
\widetilde{V}^\mu=\begin{pmatrix}
-\frac14 w_{\bar z} & -\mu\overline{B}\widetilde{\epsilon}e^{-w/2}\\[4pt]
\mu\widetilde{\epsilon}e^{w/2} & \frac14 w_{\bar z}+\frac 1 2 H_{\bar z} \widetilde{\epsilon} e^{-w/2+u/2}
\end{pmatrix}.
\]
Note that $\widetilde U^\lambda|_{\lambda=1}=\widetilde U$ and
$\widetilde V^\lambda|_{\lambda=1}=\widetilde V$ in \eqref{eq:linear_spinor_system for timelike} after suitable gauge transformation applied to $\widetilde{U}$ and $\widetilde{V}$.

\begin{theorem}\cite[Theorem 3.2]{Kiyohara&Kobayashi_timelike2022}
    Let $f$ be a conformal timelike immersion from a simply connected domain $\mathbb D\subset \mathbb C'$ into $\mathrm{Nil_t^3}$ where Dirac potential $(\Re\, U)^2-(\Im \,U)^2\neq 0$. Then the following conditions are  equivalent:
    \begin{enumerate}
        \item $f$ is a timelike minimal surface.
        \item The Dirac potential $\widetilde{\epsilon}e^{w/2}=U=V=-\frac{H}{2}e^{u/2}+\frac{i'}{4}h$ takes purely imaginary values.
        \item $d+\alpha^\mu$ defines a family of flat connections on $\mathbb D\times \mathrm{SU}_{1,1}'$.
    \end{enumerate}
\end{theorem}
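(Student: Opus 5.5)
The plan is to prove the three equivalences in the cycle (1)$\Leftrightarrow$(2) and (2)$\Leftrightarrow$(3), working directly with the structure equations of the generating spinors.

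\emph{(1)$\Leftrightarrow$(2).} This step is immediate from \eqref{eq:UV_B for timelike}. The Dirac potential is
\[
U=V=-\tfrac{H}{2}e^{u/2}+\tfrac{i'}{4}h .
\]
Here $H$, $e^{u/2}$ and $h$ are real-valued; $h$ is real by \eqref{eq:u_h_spinors for timelike}, since $\psi\overline{\psi}\in\mathbb R$. Hence $\Re U=-\tfrac H2 e^{u/2}$ and $\Im U=\tfrac h4$. Because $e^{u/2}>0$ on a regular timelike immersion, $U$ is purely imaginary if and only if $H\equiv 0$. The hypothesis $(\Re U)^2-(\Im U)^2\neq 0$ only guarantees that $U=\widetilde\epsilon e^{w/2}$ with a para-complex $w$, so that the spinor system \eqref{eq:linear_spinor_system for timelike} and the form $\alpha^\mu$ are well defined.

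\emph{(2),(1)$\Rightarrow$(3).} Since $H\equiv0$, the terms involving $H_z$ and $H_{\bar z}$ in $\widetilde U^\mu,\widetilde V^\mu$ drop out, and the connection has the same shape as \eqref{eq:UV_Maurer_Cartan_maximal}. I would write out the flatness condition
\[
\partial_{\bar z}\widetilde U^\mu-\partial_z\widetilde V^\mu+[\widetilde V^\mu,\widetilde U^\mu]=0
\]
entry by entry. The powers $\mu^{\pm1}$ separate the equation: the $\mu$-free diagonal terms give the Gauss-type equation $\tfrac12 w_{z\bar z}+\widetilde\epsilon^{2}(e^{w}-|B|^2e^{-w})=0$, where $|B|^2$ means $B\overline B$. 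The off-diagonal $\mu^{\mp1}$ terms give $\overline B_z\cdot(\dots)=0$ and $B_{\bar z}\cdot(\dots)=0$, which are the Codazzi equations. I would use the compatibility conditions of the linear system \eqref{eq:linear_spinor_system for timelike}, which hold at $\mu=1$ after the gauge mentioned in the text. With $H=0$, the Codazzi equation reduces to $B_{\bar z}=0$, i.e.\ holomorphicity of the Abresch--Rosenberg differential, which is the expected content for minimal surfaces. The Gauss equation contains no $\mu$, and the Codazzi equation with $B$ holomorphic is invariant under $B\mapsto\mu^{-2}B$, so flatness holds for every $\mu\in\mathbb S^{1'}$. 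Finally, I would check that $\alpha^\mu$ takes values in $\mathfrak{su}_{1,1}'$. This follows because the diagonal entries $\pm\tfrac14w_z$ and $\mp\tfrac14 w_{\bar z}$ are para-conjugate appropriately, and the off-diagonal entries pair as $(\mu^{-1}\widetilde\epsilon e^{w/2},\ \mu\widetilde\epsilon e^{w/2})$ and $(\mu^{-1}B\widetilde\epsilon e^{-w/2},\ \mu\overline B\widetilde\epsilon e^{-w/2})$. The key input here is $\overline{\widetilde\epsilon e^{w/2}}=-\widetilde\epsilon e^{w/2}$, which is exactly the purely imaginary condition (2).

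\emph{(3)$\Rightarrow$(2).} If $d+\alpha^\mu$ is flat and $\mathfrak{su}_{1,1}'$-valued for all $\mu$, I would expand the flatness condition in powers of $\mu$ while keeping the $H_z$ terms. The $\mu^{0}$ diagonal part and the $\mu^{\pm1}$ parts must vanish separately. Comparing these with the $\mu=1$ compatibility equations, which always hold, forces the extra terms $\tfrac12 H_z\widetilde\epsilon e^{-w/2+u/2}$ to be compatible with a $\mu$-independent $B$-term. I expect this to yield $H_z=0$, so $H$ is constant. The reality condition required for $\mathfrak{su}_{1,1}'$ then forces $U$ to be purely imaginary, which gives $H=0$.

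\emph{Main obstacle.} The hardest step is this converse direction: cleanly isolating the $H_z$ contributions in the $\mu$-expansion. To do this, I would first reduce to the Codazzi equation for general $H$, namely $B_{\bar z}=(\text{expression in }H_z,\psi)$, taken from \cite{Kiyohara&Kobayashi_timelike2022}, and then show that $\mu$-independence forces its right-hand side to vanish. Throughout, I must be careful with para-complex conjugation signs, and use $(i')^2=1$ instead of $-1$ wherever the maximal-case computations are mimicked.
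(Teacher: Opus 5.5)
The paper does not prove this statement; it quotes it from \cite[Theorem 3.2]{Kiyohara&Kobayashi_timelike2022}, so there is no in-paper argument to compare against. Your outline is correct. (1)$\Leftrightarrow$(2) is exactly the observation that $\Re U=-\tfrac H2 e^{u/2}$ with $e^{u/2}>0$. The $\mu$-separation you describe for $H=0$ is also right: the $\mu^{-1}$-part of the $(1,2)$ entry and the $\mu$-part of the $(2,1)$ entry of the curvature vanish identically. What remains is the Gauss equation together with $\overline{B}_z=B_{\bar z}=0$. Two points should be tightened.

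\emph{Flatness at $\mu=1$ in (2)$\Rightarrow$(3).} You take this from ``the compatibility conditions'' of \eqref{eq:linear_spinor_system for timelike]}, but a single solution row $\tilde\psi$ only gives $\tilde\psi\,\Omega=0$ for the curvature $\Omega$, not $\Omega=0$. You can import the Gauss--Codazzi equations from \cite{Kiyohara&Kobayashi_timelike2022}, or close it yourself:
\begin{itemize}
\item When $\overline U=-U$, we have $\widetilde\epsilon=\pm i'$ and $w$ real. Para-conjugating the system then shows that $(\overline{\psi_2},\overline{\psi_1})$ solves the same equations.
\item The matrix with rows $(\overline{\psi_2},\overline{\psi_1})$ and $(\psi_1,\psi_2)$ has determinant $h/2\neq 0$, so it is a fundamental solution.
\item Normalising by $\sqrt{h/2}$ is a scalar gauge, since $h=4e^{w/2}$, and gives \eqref{eq:extendedframe_timelike}. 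Hence $d+\alpha^1$ is flat.
\item Its off-diagonal entries then give $B_{\bar z}=0$ directly, so you do not need the general-$H$ Codazzi equation at all.
\end{itemize}

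\emph{The direction (3)$\Rightarrow$(2).} The step you call the main obstacle is actually immediate.
\begin{itemize}
\item Being $\mathfrak{su}_{1,1}'$-valued means $\widetilde V^\mu=P\,\overline{\widetilde U^\mu}\,P$ with $P=\bigl(\begin{smallmatrix}0&1\\1&0\end{smallmatrix}\bigr)$; this involution fixes exactly the span of $\mathcal E_1,\mathcal E_2,\mathcal E_3$.
\item Using $\bar\mu=\mu^{-1}$ on $\mathbb S^{1'}$, the $(2,1)$ entries give $\mu\,\widetilde\epsilon e^{w/2}=-\mu\,\overline{\widetilde\epsilon e^{w/2}}$, which is (2).
\item Tracelessness alone already forces $H_z=0$.
\end{itemize}
Your $\mu$-expansion also works and needs no comparison with the $\mu=1$ equations. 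The $\mu$-coefficient of the $(2,1)$ entry of $\partial_{\bar z}\widetilde U^\mu-\partial_z\widetilde V^\mu+[\widetilde V^\mu,\widetilde U^\mu]$ is exactly $\tfrac12\widetilde\epsilon^{2}H_z e^{u/2}$. The $\mu^{-1}$-coefficient of the $(1,2)$ entry is $\tfrac12\widetilde\epsilon^{2}H_{\bar z}e^{u/2}$. So flatness for all $\mu$ gives $H$ constant. You still need the reality condition to reach $H=0$, and that condition by itself already yields (2).
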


From now on, we will assume that the function $h$ takes positive values, that is, the image of the left-translated unit normal $f^{-1}N$ lies in the lower half of the de Sitter two-sphere $\widetilde{\mathbb S}_1^2$. For more details on timelike minimal surfaces in the Lorentzian Heisenberg group, see \cite{Kiyohara&Kobayashi_timelike2022,Kiyohara2025}.

Next, we introduce dual generating spinors for timelike minimal surfaces and show that
they determine dual timelike minimal surface in $\mathrm{Nil}_t^3$.
\subsection{Dual timelike minimal surface} \label{subsec: Dual timelike minimal surface}
We define the pair of dual generating spinors
$\psi_1^{\ast}$ and $\psi_2^{\ast}$ as
\begin{equation}\label{dual_spinors_timelike}
\psi_1^{\ast}
 = {\frac{4\sqrt{\epsilon\, B}}{h}}\;\psi_2,
\qquad
\psi_2^{\ast}
 = {\frac{4\sqrt{\overline{\epsilon}\,\overline{B}}}{h}}\;\psi_1,
\end{equation}
where $B$ is the Abresch-Rosenberg differential and $h$ is the support
function as in Subsection~\ref{subsec:Preliminaries_timelike_minimal_surfaces}. If $B\Bar{B}\equiv 0$, the corresponding timelike minimal surface is called a null scroll. We exclude null scroll from our study (for details see \cite{Kiyohara2025}). 

The following lemma shows that the dual generating spinors satisfy a linear system.
\begin{lemma}\label{lemma:timilike}
    Let $\psi_1$ and $\psi_2$ be the generating spinors of a timelike minimal
surface $f$ in $\mathrm{Nil}_t^3$, and let $\psi_1^{\ast}$ and $\psi_2^{\ast}$ be the dual
generating spinors defined in~\eqref{dual_spinors_timelike}.  
Assume that the Abresch-Rosenberg differential satisfies $B\bar{B}\not\equiv0$.
Then the vector of generating spinors $\tilde\psi^{\ast}=(\psi_1^{\ast},\psi_2^{\ast})$
satisfies the following linear spinor system:
\begin{equation}\label{eq:UandV_timelike}
  \tilde\psi^{\ast}_z = \tilde\psi^{\ast}\,\widetilde U^{\ast},
  \qquad
  \tilde\psi^{\ast}_{\bar z} = \tilde\psi^{\ast}\,\widetilde V^{\ast},
\end{equation}
where
\[
\widetilde U^{\ast} =
\begin{pmatrix}
\tfrac12 w^{\ast}_z & i'\epsilon e^{w^{\ast}/2}\\[3pt]
-i'\epsilon B^{\ast}e^{-w^{\ast}/2} & 0
\end{pmatrix},
\qquad
\widetilde V^{\ast} =
\begin{pmatrix}
0 & i'\Bar{\epsilon}\Bar{B}^{\ast}e^{-w^{\ast}/2}\\[3pt]
i' \Bar{\epsilon}e^{w^{\ast}/2} & \tfrac12 w^{\ast}_{\bar z}
\end{pmatrix},
\]
with
\begin{equation}\label{eq:B&Dicar_potential_dual_timetikeminimal}
e^{w^{\ast}/2} = \frac{4\sqrt{\epsilon\Bar{\epsilon}B\overline{B}}}{ h},\qquad B^{\ast}=B.
\end{equation}
\end{lemma}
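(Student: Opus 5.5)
The plan is to mimic the proof of Lemma~\ref{lemma:maximal}: differentiate the defining formulas \eqref{dual_spinors_timelike} directly and rewrite every term using the original spinor system \eqref{eq:linear_spinor_system for timelike} in the minimal case. With $H=0$ and $h>0$, the Remark gives $\widetilde\epsilon=i'$, and \eqref{eq:UV_B for timelike} gives $U=V=i'e^{w/2}=\tfrac{i'}{4}h$, so $e^{w/2}=h/4$. The original system then reads
\[
\psi_{1z}=\tfrac12 w_z\psi_1+i'Be^{-w/2}\psi_2,\qquad \psi_{2z}=-i'e^{w/2}\psi_1,
\]
\[
\psi_{1\bar z}=i'e^{w/2}\psi_2,\qquad \psi_{2\bar z}=-i'\overline{B}e^{-w/2}\psi_1+\tfrac12 w_{\bar z}\psi_2 .
\]
Here $\tfrac12 w_z=(\log h)_z$. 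I will also use that $B$ is para-holomorphic in the minimal case ($B_{\bar z}=0$), which is part of the theory of \cite{Kiyohara&Kobayashi_timelike2022}. Consequently $\sqrt{\epsilon B}$ depends only on $z$ and $\sqrt{\bar\epsilon\overline B}$ only on $\bar z$. Where $B\overline B\neq0$ I will also use the factorization $\sqrt{\epsilon B}\sqrt{\bar\epsilon\overline B}=\sqrt{\epsilon\bar\epsilon B\overline B}$, which gives $e^{w^\ast/2}=\frac{4\sqrt{\epsilon B}\sqrt{\bar\epsilon\overline B}}{h}$.

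For the $z$-derivatives, first compute $\psi^\ast_{1z}$. The product rule gives $(\log\frac{4\sqrt{\epsilon B}}{h})_z\psi_1^\ast$ plus $\frac{4\sqrt{\epsilon B}}{h}(-i'\tfrac h4\psi_1)=-i'\sqrt{\epsilon B}\,\psi_1$. I rewrite the latter in terms of $\psi_2^\ast=\frac{4\sqrt{\bar\epsilon\overline B}}{h}\psi_1$, which gives $-i'\frac{h\sqrt{\epsilon B}}{4\sqrt{\bar\epsilon\overline B}}\psi_2^\ast$. To compare with $-i'\epsilon B^\ast e^{-w^\ast/2}\psi_2^\ast$, I note that $\epsilon B e^{-w^\ast/2}=\frac{h\,\epsilon B}{4\sqrt{\epsilon B}\sqrt{\bar\epsilon\overline B}}=\frac{h\sqrt{\epsilon B}}{4\sqrt{\bar\epsilon\overline B}}$, so the coefficients agree. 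Next I check that the log-derivative coefficient equals $\tfrac12 w^\ast_z$: since $\sqrt{\bar\epsilon\overline B}$ is $z$-independent, $\tfrac12 w^\ast_z=(\log\sqrt{\epsilon B})_z-(\log h)_z$, which matches.

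Next, $\psi^\ast_{2z}$ equals $-(\log h)_z\psi_2^\ast$ plus $\frac{4\sqrt{\bar\epsilon\overline B}}{h}\bigl((\log h)_z\psi_1+i'B\tfrac4h\psi_2\bigr)$. The $(\log h)_z$ terms cancel, leaving $\frac{16 i' B\sqrt{\bar\epsilon\overline B}}{h^2}\psi_2$. Using $\psi_2=\frac{h}{4\sqrt{\epsilon B}}\psi_1^\ast$, this becomes $i'\frac{4B\sqrt{\bar\epsilon\overline B}}{h\sqrt{\epsilon B}}\psi_1^\ast$. This must equal $i'\epsilon e^{w^\ast/2}\psi_1^\ast=i'\frac{4\epsilon\sqrt{\epsilon B}\sqrt{\bar\epsilon\overline B}}{h}\psi_1^\ast$. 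The two agree because $B/\sqrt{\epsilon B}=\epsilon^{-1}\sqrt{\epsilon B}$ and $\epsilon^{-1}=\epsilon\,(i')^{-2}\cdot(\pm1)$. Concretely, $\epsilon^2=(i')^2=1$, so $\epsilon^{-1}=\epsilon$.

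The $\bar z$-equations are handled the same way with the roles swapped, using $\sqrt{\epsilon B}$ constant in $\bar z$ and $\epsilon\bar\epsilon=-(i')^2=-1$ (or $\bar\epsilon^{-1}=\bar\epsilon$) to identify the coefficients $i'\bar\epsilon e^{w^\ast/2}$ and $i'\bar\epsilon\overline B e^{-w^\ast/2}$.

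I expect the main obstacle to be the para-complex bookkeeping rather than the structure of the argument. There are two issues. First, the existence and branch consistency of the square roots $\sqrt{\epsilon B}$ and $\sqrt{\bar\epsilon\overline B}$ in $\mathbb C'$, including the identity $\sqrt{\epsilon B}\sqrt{\bar\epsilon\overline B}=\sqrt{\epsilon\bar\epsilon B\overline B}$ and the fact that $\overline{\sqrt{\epsilon B}}=\sqrt{\bar\epsilon\overline B}$. Second, the signs coming from $\epsilon=\pm i'$. I would handle both by working locally on the open set where $B\overline B\neq0$ and choosing branches there, so that the identities hold by construction. I would then track the signs using $\epsilon^2=1$ and $\epsilon\bar\epsilon=-1$ at each coefficient comparison. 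If a sign mismatch appears, the first thing I would revisit is the choice of $\tilde\epsilon=i'$ versus its conjugate in the original system.
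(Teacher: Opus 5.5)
\paragraph{Same method as the paper.}
Your method is the paper's: differentiate the definitions of $\psi_1^\ast,\psi_2^\ast$, substitute the original system with $\widetilde\epsilon=i'$ and $e^{w/2}=h/4$, and match coefficients. Your $z$-computations are correct. They are also more carefully justified than the paper's: you derive $\tfrac12 w^\ast_z=(\log\frac{4\sqrt{\epsilon B}}{h})_z$ from $B_{\bar z}=0$, and $B/\sqrt{\epsilon B}=\epsilon\sqrt{\epsilon B}$ from $\epsilon^2=1$.

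\paragraph{The gap: the $\bar z$ equations.}
You only assert the $\bar z$ half, as the paper does (``a similar computation''). Carried out explicitly, it does not give the stated $\widetilde V^\ast$. Using $(\sqrt{\epsilon B})_{\bar z}=0$, the cancellation of the $(\log h)_{\bar z}$ terms, and $\overline{B}=\bar\epsilon\,(\sqrt{\bar\epsilon\overline{B}})^2$, one gets
\[
\psi^\ast_{1\bar z}=\frac{4\sqrt{\epsilon B}}{h}\bigl(\psi_{2\bar z}-(\log h)_{\bar z}\psi_2\bigr)
=-\frac{16\,i'\sqrt{\epsilon B}\,\overline{B}}{h^2}\,\psi_1
=-\,i'\bar\epsilon\,e^{w^\ast/2}\,\psi_2^\ast .
\]
The lemma, however, requires $+\,i'\bar\epsilon\,e^{w^\ast/2}\psi_2^\ast$. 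The other equation, $\psi^\ast_{2\bar z}=\tfrac12 w^\ast_{\bar z}\psi_2^\ast+i'\bar\epsilon\overline{B}e^{-w^\ast/2}\psi_1^\ast$, does come out as stated.

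\paragraph{Why the swap heuristic misleads you.}
The original system is symmetric under $\psi_1\leftrightarrow\psi_2$, $z\leftrightarrow\bar z$, $B\leftrightarrow\overline{B}$ only if $i'$ is reversed as well. Indeed, $\psi_{2z}=-i'e^{w/2}\psi_1$ but $\psi_{1\bar z}=+i'e^{w/2}\psi_2$. No identity such as $\epsilon\bar\epsilon=-1$ or $\bar\epsilon^2=1$ can absorb an overall sign, since $\bar\epsilon$ enters linearly on both sides. Your fallback of revisiting $\widetilde\epsilon$ does not help either. The choice $\widetilde\epsilon=i'$ is forced by $\widetilde\epsilon e^{w/2}=\tfrac{i'}{4}h$ with $h>0$, and flipping it would flip all four off-diagonal dual coefficients, breaking the $z$-equations you already verified.

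\paragraph{The sign error is in the statement.}
With the printed entry, the compatibility condition $\partial_{\bar z}\psi^\ast_{2z}=\partial_z\psi^\ast_{2\bar z}$ would force
\[
\tfrac12 w^\ast_{z\bar z}=\epsilon\bar\epsilon\,\bigl(e^{w^\ast}+B\overline{B}e^{-w^\ast}\bigr).
\]
On the other hand, $e^{w^\ast}=\epsilon\bar\epsilon B\overline{B}e^{-w}$ together with the integrability condition $\tfrac12 w_{z\bar z}+e^{w}-B\overline{B}e^{-w}=0$ of the original system gives
\[
\tfrac12 w^\ast_{z\bar z}=\epsilon\bar\epsilon\,\bigl(B\overline{B}e^{-w^\ast}-e^{w^\ast}\bigr).
\]
These are incompatible, so this is not a branch or convention artifact.

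\paragraph{What to fix.}
Your argument proves the lemma only once the $(2,1)$ entry of $\widetilde V^\ast$ is corrected to $-i'\bar\epsilon\,e^{w^\ast/2}$. This equals $i'\epsilon\,e^{w^\ast/2}$ when $\epsilon=\pm i'$. You should also write out the $\bar z$ computation rather than assert it.
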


\begin{proof} Recall that for $\epsilon \in \{\pm 1, \pm i'\}$, we have $1/\epsilon = \epsilon$.
From~\eqref{eq:B&Dicar_potential_dual_timetikeminimal}, and the relations
\[
\frac12 w^{\ast}_z
   = \Bigl(\log{\frac{4\sqrt{\epsilon B}}{h}}\Bigr)_z,
   \qquad
   \frac{ h B}{4\sqrt{\epsilon\Bar{\epsilon}B\Bar{B}}}
   = B^{\ast}\ e^{-w^{\ast}/2}, \qquad e^{w/2}=\frac{h}{4},
\]
where $B(z)\overline{B(z)}\neq 0$, we compute
\begin{align*}
\psi^{\ast}_{1z}
 &=\Bigl({\frac{4\sqrt{\epsilon B}}{h}}\Bigr)_z\psi_2
   +{\frac{4\sqrt{\epsilon B}}{h}}\;\psi_{2z}\\
 &=\Bigl(\log{\frac{4\sqrt{\epsilon B}}{h}}\Bigr)_z\psi_1^{\ast}
  -\frac{ \epsilon h B i'}{4\sqrt{\epsilon\Bar{\epsilon}B\Bar{B}}}\psi_2^{\ast}\\
  &=\frac 12 w_z^\ast\psi_1^\ast-i'\epsilon B^\ast e^{-w^\ast/2}\psi_2^\ast.
\end{align*}
Similarly,
\begin{align*}
\psi^{\ast}_{2z}
 &=\Bigl(\frac{4\sqrt{\Bar{\epsilon} \Bar{B}}}{h}\Bigr)_z\psi_1
   +\frac{4\sqrt{\Bar{\epsilon} \Bar{B}}}{h}\;\psi_{1z}\\
 &=-\!(\log h)_z\frac{4\sqrt{\Bar{\epsilon} \Bar{B}}}{h}\psi_1
   +\frac{4\sqrt{\Bar{\epsilon} \Bar{B}}}{h}\Bigl(\tfrac12 w_z\psi_1+i'Be^{-w/2}\psi_2\Bigr)\\
   &=i'\epsilon e^{w^{\ast}/2}\psi^{\ast}_1.
\end{align*}
A similar computation for $\tilde\psi^{\ast}_{\bar z}$ gives the second equation
in~\eqref{eq:UandV_timelike}. 
\end{proof}
With the above lemma, we state the main theorem for this section. 
\begin{theorem}\label{thm 1 for timelike}
Retain all the notation and the assumptions of Lemma~\ref{lemma:timilike} and the relation in \eqref{eq:B&Dicar_potential_dual_timetikeminimal}, there exists a conformal  immersion $f^{\ast}$ in $\mathrm{Nil}_t^3$ whose generating
spinors are $\psi_1^{\ast}$ and $\psi_2^{\ast}$, with Dirac potential
$U^{\ast}=V^{\ast}=i'e^{w^{\ast}/2}$, and whose left-translated unit normal equals
$f^{-1}N$ up to sign.  
Moreover, the following hold:
\begin{enumerate}
\item By choosing the left-translated unit normal $N^{\ast}$ so that
      $(f^{\ast})^{-1}N^{\ast}=f^{-1}N\subset\mathfrak{nil}_t^3$, the metric $e^{u^{\ast}}dzd\bar{z}$, support function $h^{\ast}$,  Abresch-Rosenberg differential $B^{\ast}dz^2$, and normal Gauss map $g^{\ast}$ of $f^{\ast}$ are respectively given by
      \begin{equation}\label{eq:ehBG_dualtimelike}
      e^{u^{\ast}/2}
        = \frac{16\sqrt{\epsilon\Bar{\epsilon}B\Bar{B}}}{h^2}\,e^{u/2},
      \qquad
      h^{\ast} = \frac{16\sqrt{\epsilon\Bar{\epsilon}B\Bar{B}}}{h},
      \qquad
      B^{\ast}=B,
      \qquad\text{and}\quad
      g^{\ast}={g}.
      \end{equation} 
\item The surface $f^{\ast}$ is non-vertical and timelike minimal.
\item It satisfies the duality $f^{\ast\ast}=f$ up to a rigid motion.

\end{enumerate}
The surface $f^{\ast}$ will be called the \emph{dual timelike minimal surface} to $f$.
\end{theorem}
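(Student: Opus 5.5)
The plan is to follow the proof of Theorem~\ref{maximal main theorem} closely, with the para-complex bookkeeping of $\epsilon$ and $\widetilde{\epsilon}$ as the main extra burden.

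\textbf{Existence and minimality.} By Lemma~\ref{lemma:timilike}, the pair $\tilde\psi^{\ast}=(\psi_1^{\ast},\psi_2^{\ast})$ satisfies a linear spinor system of the type \eqref{eq:linear_spinor_system for timelike}. Its second column of the $z$-equation and first column of the $\bar z$-equation give the nonlinear Dirac equation \eqref{eq:dirac for timelike}, with potential $U^{\ast}=V^{\ast}$ equal (up to the sign convention $\epsilon$) to $i'e^{w^{\ast}/2}$. Substituting into \eqref{eq:phi_from_spinors_timelike} then produces $\varphi_k^{\ast}$ satisfying the null relation \eqref{eq:conformal_rel_timelike}. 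The compatibility of the Maurer--Cartan form $\sum\varphi_k^{\ast}\varepsilon_k\,dz+\overline{\varphi_k^{\ast}}\varepsilon_k\,d\bar z$ follows, as in \cite{Kiyohara&Kobayashi_timelike2022}, from the Dirac equation, so on the simply connected $\mathbb D'$ it integrates to a conformal map $f^{\ast}$.

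Next, $e^{u^{\ast}}=4(\psi_1^{\ast}\overline{\psi_1^{\ast}}+\psi_2^{\ast}\overline{\psi_2^{\ast}})^2$. Since
\[
\sqrt{\epsilon B}\,\overline{\sqrt{\epsilon B}}=\sqrt{\epsilon\bar\epsilon B\bar B},
\]
this gives $e^{u^{\ast}/2}=\frac{16\sqrt{\epsilon\bar\epsilon B\bar B}}{h^2}e^{u/2}$, which is nonzero wherever $B\bar B\neq0$. The same substitution into \eqref{eq:u_h_spinors for timelike} gives $h^{\ast}=\frac{16\sqrt{\epsilon\bar\epsilon B\bar B}}{h}$, and since the sign of $h^{\ast}$ is flipped relative to $e^{u}$, I would check this sign carefully. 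Comparing with \eqref{eq:UV_B for timelike}, the potential is $U^{\ast}=-\frac{H^{\ast}}{2}e^{u^{\ast}/2}+\frac{i'}{4}h^{\ast}$. Because $U^{\ast}$ is purely imaginary and equals $\frac{i'}{4}h^{\ast}$, we get $H^{\ast}=0$, which is the cited characterization theorem, condition (2). Moreover $h^{\ast}>0$ is nonvanishing, so $f^{\ast}$ is non-vertical; this settles (2).

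\textbf{Normal and Gauss map.} Write $c=4\sqrt{\epsilon B}/h$, so that $\psi_1^{\ast}=c\,\psi_2$ and $\psi_2^{\ast}=\bar c\,\psi_1$. Then
\[
\varphi_3^{\ast}=2\psi_1^{\ast}\overline{\psi_2^{\ast}}=2c^2\psi_2\overline{\psi_1},
\]
and similarly for $\varphi_1^{\ast},\varphi_2^{\ast}$. One checks that $f^{\ast-1}f^{\ast}_z$ is a combination of $f^{-1}f_z$ and $f^{-1}f_{\bar z}$, or more simply that it is orthogonal to $f^{-1}N$ as given by \eqref{eq:gauss_spinor for timelike}. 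Hence the tangent planes agree after left translation, and the normals agree up to sign.

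For the Gauss map, directly
\[
g^{\ast}=i'\frac{\overline{\psi_1^{\ast}}}{\psi_2^{\ast}}=i'\frac{\bar c\,\overline{\psi_2}}{\bar c\,\psi_1}=i'\frac{\overline{\psi_2}}{\psi_1}.
\]
This is not literally $g$; it is the Gauss map of the opposite normal, related to $g$ by inversion $g\mapsto 1/\bar g$ in $\mathbb C'$. So choosing $N^{\ast}$ with $(f^{\ast})^{-1}N^{\ast}=f^{-1}N$ corresponds to using the stereographic convention matching $f$, and yields $g^{\ast}=g$. This sign and orientation bookkeeping is the step I expect to be most delicate.

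\textbf{Abresch--Rosenberg differential and duality.} The equality $B^{\ast}=B$ is read off from the off-diagonal entries of $\widetilde U^{\ast}$ in Lemma~\ref{lemma:timilike}, matched against the standard form of $\widetilde U$ with $\widetilde\epsilon$ replaced appropriately. For (3), apply \eqref{dual_spinors_timelike} again using $h^{\ast}$ and $B^{\ast}=B$:
\[
\psi_1^{\ast\ast}=\frac{4\sqrt{\epsilon B}}{h^{\ast}}\,\bar c\,\psi_1=\frac{\sqrt{\epsilon B}\,\overline{\sqrt{\epsilon B}}}{\sqrt{\epsilon\bar\epsilon B\bar B}}\,\psi_1=\psi_1,
\]
and likewise $\psi_2^{\ast\ast}=\psi_2$. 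Identical spinors give identical $\varphi_k$, hence the same Maurer--Cartan form, so $f^{\ast\ast}=f$ up to left translation, that is, a rigid motion.

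The main obstacle is the choice of branch of $\sqrt{\epsilon B}$ in $\mathbb C'$: a para-complex square root requires $x\pm y\ge0$. I would choose $\epsilon\in\{\pm i'\}$ locally, away from the zeros of $B\bar B$, so that these roots exist and satisfy $\sqrt{\epsilon B}\,\overline{\sqrt{\epsilon B}}=\sqrt{\epsilon\bar\epsilon B\bar B}>0$.
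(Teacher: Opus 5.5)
Your outline is the same as the paper's: Lemma~\ref{lemma:timilike}, substitution into the spinor formulas, an imaginary Dirac potential giving $H^\ast=0$, and the double dual. The double-dual step is fine up to a harmless overall sign of $\psi$. But the two verifications you add at the delicate points do not go through, and these are exactly the two points the paper's proof only asserts.

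\textbf{The normal and Gauss map.} Since $\overline g=-i'\psi_1/\overline{\psi_2}$, your formula gives $g^\ast=i'\overline{\psi_2}/\psi_1=-1/\overline g$, not $1/\overline g$.

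\begin{itemize}
\item In the formula expressing $f^{-1}N$ through $g$, the map $g\mapsto 1/\overline g$ is the antipodal map. The map $g\mapsto -1/\overline g$ instead sends $(n_1,n_2,n_3)\mapsto(n_1,n_2,-n_3)$.
\item The tangent vectors confirm this. Put $c=4\sqrt{\epsilon B}/h$, so $\psi_1^\ast=c\psi_2$ and $\psi_2^\ast=\bar c\psi_1$, and use $\epsilon/\bar\epsilon=-1$ for $\epsilon=\pm i'$. Then $\varphi_1^\ast=-c^2\overline{\varphi_1}$, $\varphi_2^\ast=-c^2\overline{\varphi_2}$ and $\varphi_3^\ast=c^2\overline{\varphi_3}$.
\item Hence $\langle f^{-1}N,(f^\ast)^{-1}f^\ast_z\rangle=2c^2n_3\overline{\varphi_3}$. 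This is nonzero in general, since $n_3\neq0$ when $h>0$. So the orthogonality you intend to check is false.
\item The left-translated tangent planes actually differ by the isometric automorphism $\rho(x_1,x_2,x_3)=(-x_1,-x_2,x_3)$. No choice of orientation gives $(f^\ast)^{-1}N^\ast=f^{-1}N$.
\item You must replace $f^\ast$ by $\rho\circ f^\ast$, or equivalently use $\bar\epsilon$ instead of $\epsilon$ in \eqref{eq:phi_from_spinors_timelike} for $f^\ast$.
\item Direct substitution also gives $h^\ast=-c\bar c\,h$, the opposite sign from the value you state.
\end{itemize}

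\textbf{Existence and minimality.} Lemma~\ref{lemma:timilike} gives $\partial_z\psi_2^\ast=i'\epsilon e^{w^\ast/2}\psi_1^\ast$. Hence $U^\ast=-i'\epsilon e^{w^\ast/2}$, where $e^{w^\ast/2}=\tfrac h4 c\bar c$ is real.

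\begin{itemize}
\item For $\epsilon=\pm i'$ the factor $-i'\epsilon=\mp1$ makes $U^\ast$ \emph{real}. This is not a sign convention, so your phrase ``up to the sign convention $\epsilon$'' is exactly where the argument breaks.
\item Compatibility also does not follow from the Dirac equation alone. Take spinors as in \eqref{eq:phi_from_spinors_timelike} satisfying it with $U=V$, and write $\Phi=f^{-1}f_z$. The $\varepsilon_1$- and $\varepsilon_2$-components of $\Phi_{\bar z}-\overline{\Phi}_z=[\Phi,\overline{\Phi}]$ then hold automatically.
\item The $\varepsilon_3$-component, however, reads $U-\overline U=i'(\psi_2\overline{\psi_2}-\psi_1\overline{\psi_1})$, i.e.\ $\Im U=h/4$.
\item For the dual spinors, $\Im U^\ast=0$ while $h^\ast/4=-c\bar c\,h/4\neq 0$. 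So you can neither integrate to obtain $f^\ast$ nor invoke the characterization theorem.
\item A direct computation from the system for $f$, with $c=4q/h$ and $q^2=\kappa B$, gives $U^\ast=-\frac{4i'}{h}\frac{B\bar q}{q}=-\frac{4i'q\bar q}{h\kappa}$.
\item This is real for $\kappa=\epsilon=\pm i'$. For $\kappa=1$, i.e.\ $\sqrt{B}$ in place of $\sqrt{\epsilon B}$, one gets $U^\ast=V^\ast=\tfrac{i'}{4}h^\ast$, and then $H^\ast=0$ follows.
\end{itemize}

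So the claim $U^\ast=i'e^{w^\ast/2}$, on which both existence and part (2) rest, needs either a corrected normalization of \eqref{dual_spinors_timelike} or a genuine argument.
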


\begin{proof}  
From Lemma~\ref{lemma:timilike}, $\tilde\psi^{\ast}=(\psi_1^{\ast},\psi_2^{\ast})$ defines a conformal immersion
$f^{\ast}$ in $\mathrm{Nil}_t^3$.  
Since tangent planes of $f$ and $f^{\ast}$ coincide, their left-translated unit normals $f^{-1}N$ and $(f^{\ast})^{-1}N^{\ast}$ agree
up to sign.

(1)  The metric in \eqref{eq:ehBG_dualtimelike} is clear from the definition of
$\psi_1^{\ast},\psi_2^{\ast}$ and  $h^\ast$ follows from the definition of the support function of $f^\ast$. We choose such Gauss map $(f^{\ast})^{-1}N^{\ast}=f^{-1}N$ that gives $g^\ast=g.$  

(2)  From construction $f^\ast$ is non-vertical and $H^{\ast}=0$ follows from the Dirac potential
\[
U^{\ast}=V^{\ast}=i'e^{w^{\ast}/2}
  =-\frac{H^{\ast}}{2}e^{u^{\ast}/2}
    +\frac{i'}{4}h^{\ast}
  = \frac{4i'\sqrt{\epsilon \Bar{\epsilon} B \overline{B}}}{h}                        .
\]
Thus $f^{\ast}$ is non-vertical timelike minimal surface in $\operatorname{Nil}_t^3$.

(3)  The double-dual spinors are
$$
\psi^{\ast\ast}_1
 = {\frac{4\sqrt{\epsilon\, B^{\ast}}}{h^{\ast}}}\,\psi^{\ast}_2=\psi_1,
\qquad
\psi^{\ast\ast}_2
 = {\frac{4\sqrt{\Bar{\epsilon}\, \overline{B}^{\ast}}}{h^{\ast}}}\,\psi^{\ast}_1=\psi_2.
$$
Hence $(\psi^{\ast\ast}_1,\psi^{\ast\ast}_2)$ defines the original surface $f$ up to rigid
motion.
\end{proof}


Next, we present the Sym formula for dual timelike minimal surfaces in
$\mathrm{Nil}_t^3$.

\subsection{The Sym formula for dual timelike minimal surface}\label{subsec: The Sym formula for dual timelike minimal surface}

We first identify the Lie algebra $\mathfrak{nil_t^3}$ with  $\mathfrak{su}_{1,1}'$ (Lie algebra of $\operatorname{SU}'_{1,1}$)
as a real vector space. In $\mathfrak{su}_{1,1}'$, choose the basis
\begin{equation}\label{eq:basis_su_11}
\mathcal E_1=\frac12\begin{pmatrix}0&-i'\\i'&0\end{pmatrix},\qquad
\mathcal E_2=\frac12\begin{pmatrix}0&-1\\-1&0\end{pmatrix},\qquad
\mathcal E_3=\frac12\begin{pmatrix}i'&0\\0&-i'\end{pmatrix}.
\end{equation}
We can see that $\{\mathcal E_1,\mathcal E_2,\mathcal E_3\}$ is an orthogonal basis of $\mathfrak{su}_{1,1}'$. A linear isomorphism $\Xi_t:\mathfrak{su}_{1,1}'\longrightarrow\mathfrak{nil}_t^3$
is then given by
\begin{equation}\label{identifacatin_timelike}
\mathfrak{su}_{1,1}'\ni x_1\mathcal E_1 + x_2\mathcal E_2 + x_3\mathcal E_3 \longmapsto x_1 \varepsilon_1 + x_2 \varepsilon_2 + x_3 \varepsilon_3\in\mathfrak{nil}_t^3.
\end{equation}
Define a smooth bijection $\Xi_{\mathrm{nil}_t}:\mathfrak{su}_{1,1}'\longrightarrow\mathrm{Nil}_t^3$ by
$\Xi_{\mathrm{nil}_t}:=\exp\circ\Xi_t$
$$
\Xi_{\mathrm{nil}_t}(x_1\mathcal E_1 + x_2\mathcal E_2 + x_3\mathcal E_3)=(x_1, x_2, x_3).
$$
For timelike minimal surface with $h>0$, the Maurer-Cartan forms $\alpha^{\mu}$ as in \eqref{maurer-cartan-form-timelike} rewrite as
\begin{equation}\label{maurer-cartan-form-timelike_minimal}
\alpha^{\mu}={U}^\mu\, dz+ {V}^\mu\, d\overline{z}
\end{equation}
where
\[
{U}^\mu=\begin{pmatrix}
\frac14 w_z  & -\mu^{-1}i' e^{w/2}\\[4pt]
\mu^{-1}Bi'e^{-w/2} & -\frac14 w_z
\end{pmatrix},\qquad
{V}^\mu=\begin{pmatrix}
-\frac14 w_{\bar z} & -\mu\overline{B}i'e^{-w/2}\\[4pt]
\mu i'e^{w/2} & \frac14 w_{\bar z}
\end{pmatrix}.
\]
Let $F$ be the
corresponding $\mathrm{SU}_{1,1}$ valued solution of the equation $F^{-1}dF=\alpha^\mu$
$(\mu\in \mathbb S^{1'})$, where $\alpha^\mu$ is defined by \eqref{maurer-cartan-form-timelike_minimal}. Then $F$ is called
the \emph{extended frame of the timelike minimal surface} $f$. Also, $F$ can be represented as:
\begin{equation}\label{eq:extendedframe_timelike}
F(\mu)=\frac{1}{\sqrt{\psi_2(\mu)\overline{\psi_2(\mu)}-\psi_1(\mu) \overline{\psi_1(\mu)}   }}
\begin{pmatrix}
   \overline{\psi_2(\mu)} & \overline{\psi_1(\mu)}\\[6pt]
\psi_1(\mu) & \psi_2(\mu) 
\end{pmatrix},
\end{equation}
where $\psi_1(\mu)$ and $\psi_2(\mu)$ denote families of functions such that
$\psi_1(\mu=1)=\psi_1$ and $\psi_2(\mu=1)=\psi_2$, which are the generating spinors
of the surface $f$.

We now prove the Sym formula for the dual of the timelike minimal surfaces.
\begin{theorem}\label{thm:Sym_formula_dualtimelike}
Let $F$ be the extended frame for a timelike minimal surface $f$ on $\mathbb D$ with
$B\not\equiv0$ and $\epsilon=i'$. Define $m_\pm$ and $N_m$ respectively by
$$
m_\pm = -i'\,\mu(\partial_\mu F)F^{-1}\pm N_m,\qquad
N_m=\frac{i'}{2}\operatorname{Ad}(F)\sigma_3,
$$
where $\sigma_3=\begin{pmatrix}
    1& 0\\
    0& -1
\end{pmatrix}.$ Moreover, define  maps $f^\mu_\pm:\mathbb D\longrightarrow\mathrm{Nil}_t^3$ by $f^\mu_\pm:=\Xi_{\mathrm{nil_t}}\circ\widehat f^\mu_\pm$
with
\begin{equation}\label{eq:sym_formula_timelike}
\widehat f^\mu_\pm=
\Bigl(m_\pm^{\mathrm{o}} - \frac{i'}{2}\mu(\partial_\mu m_\pm)^{\mathrm{d}}\Bigr)\Big|_{\mu\in \mathbb S^1},
\end{equation}
where superscripts ``o'' and ``d'' denote off-diagonal and diagonal parts, respectively.
Then, for each $\mu\in \mathbb S^{1'}$, the maps $f^\mu_\pm$ are timelike minimal surfaces in $\mathrm{Nil}_t^3$
and $N_m$ is the normal Gauss map of $f^\mu_\pm$. In particular, $f^\mu_-|_{\mu=1}$
is the original timelike  minimal surface $f$ up to a rigid motion and $f^\mu_+|_{\mu=1}$ is the
dual timelike minimal surface $f^\ast$ of $f$. Moreover, for each $\mu\in \mathbb S^{1'}$, $f^\mu_-$ and $f^\mu_+$ are dual to each other.
\end{theorem}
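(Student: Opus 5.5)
The plan is to follow the template of the proof of Theorem~\ref{thm:Sym_formula_dualmaximal}, replacing $\mathrm{SU}_2$ by $\mathrm{SU}_{1,1}'$ and $i$ by $i'$. That $f^\mu_-|_{\mu=1}$ recovers $f$ up to a rigid motion, and that $f^\mu_-$ is timelike minimal, is the Sym formula of \cite{Kiyohara&Kobayashi_timelike2022}. We therefore concentrate on $f^\mu_+$ and on the duality between $f^\mu_+$ and $f^\mu_-$.

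\textbf{Step 1: compute $\partial_z m_+$.} Differentiate $m_+=-i'\mu(\partial_\mu F)F^{-1}+N_m$ using $F^{-1}F_z=U^\mu$. This gives
\[
\partial_z m_+=\operatorname{Ad}(F)\Bigl(-i'\mu\partial_\mu U^\mu+\tfrac{i'}{2}[U^\mu,\sigma_3]\Bigr).
\]
Since $\mu\partial_\mu U^\mu$ is just the off-diagonal part of $U^\mu$ with a sign, the two terms cancel in the $(1,2)$ entry and add in the $(2,1)$ entry. This is the reason the sign $+$ gives the dual while $-$ gives the original. We should obtain
\[
\partial_z m_+=c\,\mu^{-1}Be^{-w/2}\operatorname{Ad}(F)\begin{pmatrix}0&0\\1&0\end{pmatrix},
\]
with $c$ a constant multiple of $i'$.

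Next insert the representation \eqref{eq:extendedframe_timelike} of $F$. Use $e^{w/2}=h/4$ (minimal case with $h>0$) and $\psi_2\overline{\psi_2}-\psi_1\overline{\psi_1}=h/2$ to rewrite the entries as $(16\,B/h^2)$ times quadratic expressions in $\psi_1(\mu),\psi_2(\mu)$. Substituting the dual spinors \eqref{dual_spinors_timelike} with $\epsilon=i'$, we expect to recognise
\[
\partial_z m_+=\varphi_1^+(\mu)\mathcal E_1+\varphi_2^+(\mu)\mathcal E_2+i'\varphi_3^+(\mu)\mathcal E_3,
\]
where $\varphi_k^+(\mu)=\mu^{-1}\varphi_k^\ast$ and the $\varphi_k^\ast$ are built from $\psi^\ast_1(\mu),\psi^\ast_2(\mu)$ by \eqref{eq:phi_from_spinors_timelike}. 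Along the way we must check that $\sqrt{i'B}\,\sqrt{\overline{i'}\,\overline B}$ combines to exactly the required factor; this is where $\epsilon=i'$ is used.

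\textbf{Step 2: the diagonal correction.} Compute $\partial_z\bigl(-\tfrac{i'}{2}\mu\partial_\mu m_+\bigr)^{\mathrm d}$ via the identity
\[
\partial_z(\mu\partial_\mu m_+)=\mu\partial_\mu(\partial_z m_+)+[\ \cdot\ ,\ \cdot\ ]\text{-terms},
\]
in parallel with \eqref{eq:2ndpart_symformula_maximal}. The goal is
\[
-\Bigl(\varphi_3^+-\tfrac12\varphi_1^+\!\int\varphi_2^+\,dz+\tfrac12\varphi_2^+\!\int\varphi_1^+\,dz\Bigr)\mathcal E_3,
\]
possibly with sign changes coming from $(i')^2=1$ and the Lorentzian bracket. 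The off-diagonal part of $\partial_z m_+$, integrated, supplies the horizontal coordinates. The commutator $[m_+,\partial_z m_+]^{\mathrm d}$ then reproduces precisely the correction term $\tfrac12(x_1dx_2-x_2dx_1)$ from the Nil group law. After applying $\Xi_{\mathrm{nil}_t}$ and left translation, this gives
\[
(f^\mu_+)^{-1}\partial_z f^\mu_+=\sum_k\varphi_k^+(\mu)\varepsilon_k .
\]

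\textbf{Step 3: conclusions.} By Step 2, $\psi_1^\ast(\mu),\psi_2^\ast(\mu)$ are generating spinors of $f^\mu_+$. Its Dirac potential is $i'\cdot 4\sqrt{B\overline B}/h$, which is purely imaginary and independent of $\mu$, because $h$ and $|B|$ do not depend on the spectral parameter. By the equivalence (1)$\Leftrightarrow$(2) of \cite[Theorem 3.2]{Kiyohara&Kobayashi_timelike2022}, $f^\mu_+$ is timelike minimal. The metric is $e^{u^\ast}$ as in \eqref{eq:ehBG_dualtimelike}, so $f^\mu_+$ is timelike wherever $B\overline B\neq0$.

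$N_m$ is a normal for both $f^\mu_\pm$, since $\partial_z m_\pm$ is built from $\operatorname{Ad}(F)$ of off-diagonal matrices, which are orthogonal to $\operatorname{Ad}(F)\sigma_3$. At $\mu=1$ the spinors reduce to \eqref{dual_spinors_timelike}, so $f^1_+=f^\ast$ by Theorem~\ref{thm 1 for timelike}. For general $\mu$, the spinors of $f^\mu_\pm$ are $(\psi_1(\mu),\psi_2(\mu))$ and $(\psi^\ast_1(\mu),\psi^\ast_2(\mu))$, which are related by \eqref{dual_spinors_timelike}. Hence $f^\mu_\pm$ are dual to each other.

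\textbf{Main obstacle.} We expect the hardest part to be Step 2 together with the sign bookkeeping. Para-complex conjugation, $(i')^2=+1$, and the indefinite inner product on $\mathfrak{su}_{1,1}'$ can flip signs in the bracket term. We would check these first on the $\mathcal E_3$-coefficient by expanding $[m_+,\partial_z m_+]^{\mathrm d}$ using $m_+^{\mathrm o}=\int\partial_z m_+$.
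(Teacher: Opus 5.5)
Your outline follows the paper's proof almost line for line. It uses the same formula $\partial_z m_+=\operatorname{Ad}(F)\bigl(-i'\mu\partial_\mu U^\mu+\tfrac{i'}{2}[U^\mu,\sigma_3]\bigr)$ and the same rewriting through the extended frame and the dual spinors. The bracket computation for the diagonal part and the minimality argument from the $\mu$-independent, purely imaginary Dirac potential are also the same.

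\textbf{The step that fails.} Your justification that $N_m$ is normal to $f^\mu_\pm$ does not work as written. It is true that $\operatorname{Ad}(F)\sigma_3$ is trace-orthogonal to $\partial_z m_\pm$. However, $\partial_z m_+$ is not the left-translated tangent of $f^\mu_+$. After $\Xi_t^{-1}$ that tangent is $\varphi_1^+\mathcal E_1+\varphi_2^+\mathcal E_2+\varphi_3^+\mathcal E_3$, whereas $\partial_z m_+$ carries $i'\varphi_3^+$ in the $\mathcal E_3$-slot; removing this $i'$ is exactly what the diagonal correction does. The $\mathcal E_3$-coefficient of $N_m$ is $(\psi_1\overline{\psi_1}+\psi_2\overline{\psi_2})/(\psi_2\overline{\psi_2}-\psi_1\overline{\psi_1})\neq0$. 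So the pairing of $N_m$ with the actual tangent equals this coefficient times $(1-i')\varphi_3^+$, which is generically nonzero. The same computation shows that $\Xi_t(N_m)$ is not the unit normal of $f^\mu_-$ either. The clause ``$N_m$ is the normal Gauss map'' therefore has to be read in the sense of Kiyohara--Kobayashi (through the Gauss map $g$ and a stereographic projection), not as metric orthogonality under $\Xi_t$, and it must be checked that way. The paper's proof does not address this clause at all; it simply inherits it.

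\textbf{Constants to correct.} Several of your predicted constants are off, and the bookkeeping will not close until they are fixed.
\begin{itemize}
\item In Step~1 the coefficient is real: $\partial_z m_+=2\mu^{-1}Be^{-w/2}\operatorname{Ad}(F)\begin{pmatrix}0&0\\1&0\end{pmatrix}$. Matching with the dual spinors gives $\varphi_k^+=\epsilon\mu^{-1}\varphi_k^\ast$, which is the paper's formula, not $\mu^{-1}\varphi_k^\ast$.
\item In Step~2, $\partial_z$ and $\mu\partial_\mu$ commute. The brackets come from differentiating $\operatorname{Ad}(F)$ in $\mu$, using $\mu(\partial_\mu F)F^{-1}=-i'(m_+-N_m)$. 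This yields $\partial_z\bigl(-\tfrac{i'}{2}\mu\partial_\mu m_+\bigr)^{\mathrm d}=\tfrac{i'}{2}(\partial_z m_+)^{\mathrm d}+\tfrac12[m_+-N_m,\partial_z m_+]^{\mathrm d}$. Since $[N_m,\partial_z m_+]=-i'\partial_z m_+$, this equals $i'(\partial_z m_+)^{\mathrm d}+\tfrac12[m_+^{\mathrm o},(\partial_z m_+)^{\mathrm o}]$. The result is $\varphi_3^+$ plus the Nil correction with an overall plus sign, unlike the maximal case.
\item In Step~3, $\epsilon\overline{\epsilon}=-1$ for $\epsilon=i'$, so the potential is $4i'\sqrt{-B\overline{B}}/h$. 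The existence of $\sqrt{i'B}$ forces $-B\overline{B}\ge0$, so your $\sqrt{B\overline{B}}$ is undefined wherever $B\overline{B}\neq0$.
\end{itemize}
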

\begin{proof}
From \cite[Theorem 4.1]{Kiyohara&Kobayashi_timelike2022}, it is clear that $f^\mu_-|_{\mu=1}$ is a timelike minimal surface in $\operatorname{Nil}_t^3$.
We only show for $f^\mu_+|_{\mu=1}$, and both are dual to each other for each $\mu\in \mathbb S^{1'}$. From a direct calculation, we have
$$
\partial_z m_+ = \operatorname{Ad}(F)\Bigl(-i'\mu\partial_\mu U^\mu + \frac{i'}{2}[U^\mu,\sigma_3]\Bigr)
= 2\mu^{-1} B e^{-w/2}\operatorname{Ad}(F)\begin{pmatrix}0&0\\1&0\end{pmatrix}.
$$
Using the generating spinors and the support function $h$, this becomes
\[
\partial_z m_+ = \frac{16\mu^{-1}B}{h^2}
\begin{pmatrix}
\psi_2(\mu)\overline{\psi_1(\mu)} & -\overline{\psi_1(\mu)^2}\\[6pt]
 \psi_2(\mu)^2& -\psi_2(\mu)\overline{\psi_1(\mu)}
\end{pmatrix}.
\]
From the dual
generating spinors \eqref{dual_spinors_timelike}, 
we have
\begin{equation}\label{eq:partial_m+_timelike}
\partial_z m_+ = \varphi_1^+(\mu)\mathcal E_1 + \varphi_2^+(\lambda)\mathcal E_2 + i\,\varphi_3^+(\mu)\mathcal E_3,
\end{equation}
where
\begin{align*}
\varphi_1^+(\mu)&=\mu^{-1}\epsilon\, i'\,\bigl((\overline{\psi_2^\ast(\mu)})^2 + (\psi_1^\ast(\mu))^2\bigr),\\
\varphi_2^+(\mu)&=\mu^{-1}\epsilon\,\bigl((\overline{\psi_2^\ast(\mu)})^2 -(\psi_1^\ast(\mu))^2\bigr),\\
\varphi_3^+(\mu)&=2\mu^{-1}\epsilon\,\psi_1^\ast(\mu)\overline{\psi_2^\ast(\mu)}.
\end{align*}
A computation of $\partial_z(i\mu\partial_\mu m_+)$ combined with the diagonal part
analysis yields
\begin{equation}\label{eq:2ndpart_symformula_timelike}
    \begin{aligned}
\partial_z\Bigl(-\tfrac{i'}{2}\mu(\partial_\mu m_+)\Bigr)^d
&=\frac{i'}{2}(\partial_z m_+)^d+\frac{1}{2}[m_+-N_m,\,\partial_z m_+]^d\\
&=\Bigl(\varphi_3^+(\mu) -\frac12\varphi_1^+(\mu)\int\varphi_2^+(\mu)\,dz
 + \frac12\varphi_2^+(\mu)\int\varphi_1^+(\mu)\,dz\Bigr)\mathcal E_3.
\end{aligned}
\end{equation}
Combining \eqref{eq:partial_m+_timelike} and \eqref{eq:2ndpart_symformula_timelike} we obtain
\[
\partial_z\widehat f^\mu_+ = \varphi_1^+(\mu)\mathcal E_1 + \varphi_2^+(\mu)\mathcal E_2
+ \Bigl(\varphi_3^+(\mu) - \frac12\varphi_1^+(\mu)\int\varphi_2^+(\mu)\,dz
 + \frac12\varphi_2^+(\mu)\int\varphi_1^+(\mu)\,dz\Bigr)\mathcal E_3.
\]
Using the identification \eqref{identifacatin_timelike} and the left translation by $(f^\mu_+)^{-1}$, we obtain
$$
(f^\mu_+)^{-1}\partial_z f^\mu_+ = \varphi_1^+(\mu)\varepsilon_1 + \varphi_2^+(\mu)\varepsilon_2 + \varphi_3^+(\mu)\varepsilon_3.
$$
It is clear that $\psi_1^\ast(\mu)$ and $\psi_2^\ast(\mu)$ are spinors for $f^\mu_+$ for
each $\mu\in \mathbb S^1$. In particular, the function
\[
\frac{1}{2}\bigg(\psi_2^*\,\overline{\psi_2^*}-\psi_1^*\,\overline{\psi_1^*}\bigg)= e^{w^\ast/2} =\frac{4\sqrt{\epsilon\Bar{\epsilon}B\Bar{B}}}{h}
\]
does not depend on $\mu$ and so the mean curvature $H=0$ ($i' e^{w^*/2}\in i'\mathbb C'$, for each $\mu$). Moreover, the conformal factor of the induced metric of $f^\lambda_+$ is given by
\[
e^{u^\ast}=4(\psi_1^*\,\overline{\psi_1^*}+\psi_2^*\,\overline{\psi_2^*})^2 = \frac{16\sqrt{\epsilon\Bar{\epsilon}B\Bar{B}}}{h^2}\,e^u,
\]
which is degenerate only at $B(z)\overline{B(z)}=0$. Thus, $f^\mu_+$ defines a timelike minimal surface in
$\mathrm{Nil}_t^3$ for each $\mu\in \mathbb S^1$ where $B(z)\overline{B(z)}\neq 0.$ This completes the proof.
\end{proof}
\textbf{Example:} Consider two hyperbolic paraboloids $x_3=\tfrac{x_1x_2}{2}$ and $x_3=-\tfrac{x_1x_2}{2}$ given as in \cite[Section 6]{Kiyohara&Kobayashi_timelike2022} and their corresponding holomorphic potentials defined as
\[
\zeta_1 = \mu^{-1}
\begin{pmatrix}
0 & -\dfrac{i'}{4} \\
\dfrac{i'}{4} & 0
\end{pmatrix} dz,\quad \text{ and }\quad\zeta_2 = \mu^{-1}
\begin{pmatrix}
0 & -\dfrac{i'}{4} \\
-\dfrac{i'}{4} & 0
\end{pmatrix} dz.
\]

Let $\Phi_1$ and $\Phi_2$ be the solutions of the ODEs $d\Phi_1=\Phi_1\zeta_1$ and $d\Phi_2=\Phi_2\zeta_2$ with the initial conditions $\Phi_1(z=0)=\mathrm{id}$ and $\Phi_2(z=0)=\mathrm{id}$, respectively.
Then, from \cite[Section 6]{Kiyohara&Kobayashi_timelike2022}, the extended frames $F_1$ and $F_2$ from the Iwasawa decompositions $\Phi_1=F_1V_1$ and $\Phi_2=F_2V_2$, respectively, are given by
\begin{align*}
F_1 &=
\begin{pmatrix}
\cos \dfrac{\mu^{-1}z+\mu\bar z}{4} &
- i' \sin \dfrac{\mu^{-1}z+\mu\bar z}{4} \\[10pt]
i' \sin \dfrac{\mu^{-1}z+\mu\bar z}{4} &
\cos \dfrac{\mu^{-1}z+\mu\bar z}{4}
\end{pmatrix},\\
F_2 &=
\begin{pmatrix}
\cosh \dfrac{-\mu^{-1}z+\mu\bar z}{4} &
i' \sinh \dfrac{-\mu^{-1}z+\mu\bar z}{4} \\[10pt]
i' \sinh \dfrac{-\mu^{-1}z+\mu\bar z}{4} &
\cosh \dfrac{-\mu^{-1}z+\mu\bar z}{4}
\end{pmatrix}.
\end{align*}

By applying Sym formula as in \eqref{eq:sym_formula_timelike} to both extended frames $F_1$ and $F_2$, the respective family of timelike minimal surfaces $f_{1-}^\mu$ and $f_{2-}^\mu$  in $\mathrm{Nil}_t^3$ are given by
\begin{align*}
    f_{1-}^\mu&=
\left(
i' \frac{\mu^{-1}z-\mu\bar z}{2},
\, \sin \frac{\mu^{-1}z+\mu\bar z}{2},
\, i' \frac{\mu^{-1}z-\mu\bar z}{4}
\sin \frac{\mu^{-1}z+\mu\bar z}{2}
\right),\\
f_{2-}^\mu& =
\left(
- \sinh \frac{i'(-\mu^{-1}z+\mu\bar z)}{2},
\, \frac{\mu^{-1}z+\mu\bar z}{2},
\, \frac{\mu^{-1}z+\mu\bar z}{4}
\sinh \frac{i'(-\mu^{-1}z+\mu\bar z)}{2}
\right).
\end{align*}
For each $\mu=e^{i't}\in\mathbb S_1^1,$ $f_{1-}^\mu$ and $f_{2-}^\mu$ are parts of the hyperbolic paraboloids $x_3=\tfrac{x_1x_2}{2}$ and $x_3=-\tfrac{x_1x_2}{2}$ respectively. Using the Sym formula as in \eqref{eq:sym_formula_timelike}, the corresponding dual surfaces are
\begin{align*}
    f_{1+}^\mu&=
\left(
i' \frac{\mu^{-1}z-\mu\bar z}{2},
\, -\sin \frac{\mu^{-1}z+\mu\bar z}{2},
\, -i' \frac{\mu^{-1}z-\mu\bar z}{4}
\sin \frac{\mu^{-1}z+\mu\bar z}{2}
\right),\\
f_{2+}^\mu& =
\left(
 \sinh \frac{i'(-\mu^{-1}z+\mu\bar z)}{2},
\, \frac{\mu^{-1}z+\mu\bar z}{2},
\, -\frac{\mu^{-1}z+\mu\bar z}{4}
\sinh \frac{i'(-\mu^{-1}z+\mu\bar z)}{2}
\right).
\end{align*}
Then it is clear that $f_{1+}^\mu$ and $f_{2+}^\mu$ are the parts of the same hyperbolic paraboloids $x_3=\tfrac{x_1x_2}{2}$ and $x_3=-\tfrac{x_1x_2}{2}$ respectively. Therefore, both timelike minimal surfaces $f_{1-}^\mu$ and $f_{2-}^\mu$ are self-dual.

\bibliography{ref.bib}

\end{document}